\numberwithin{equation}{section}
\newtheorem{theorem}{Theorem}[section]
\newtheorem{lemma}[theorem]{Lemma}
\newtheorem{problem}[theorem]{Problem}
\title[flow by Gauss curvature to the Minkowski problem of $p$-harmonic measure]{flow by Gauss curvature to the Minkowski problem of $p$-harmonic measure}
\author{Chao Li}
\address{Chao Li:
School of Mathematics and Statistics, Ningxia University, Yinchuan, Ningxia, 750021, China.}
\address{Ningxia Basic Science Research Center of Mathematics, Ningxia University, Yinchuan 750021, China.}
	\email{lichao@nxu.edu.cn, lichao166298@163.com}
\author{Xia Zhao}
\address{Xia Zhao: School of Mathematics and Statistics, Nanjing University of Science and Technology, Nanjing, 210014, China}
\email{zhaoxia20161227@163.com}
	\subjclass[2020]{35K96, 52A20, 53C21, 31B05, 31A15}
\keywords{$p$-harmonic measures, Minkowski problem, Gauss curvature flow, Monge-Amp\`{e}re equation}
\begin{document}
\begin{abstract}
The Minkowski problem of harmonic measures was first studied by Jerison \cite{JER1991}. Recently, Akman and Mukherjee \cite{AKM2023} studied the Minkowski problem corresponding to $p$-harmonic measures on convex domains and generalized Jerison's results. In this paper, we prove the existence of the smooth solution to the Minkowski problem for the $p$-harmonic measure  by method of the Gauss curvature flow.
\end{abstract}

\maketitle

\vskip 20pt
\section{Introduction and main results}

The classical Minkowski problem is one of the core problems in Brunn-Minkowski theoretical research in convex geometry. In differential geometry, the classical Minkowski problem requires the construction of a strictly compact convex hypersurface $\Omega$ with a specific Gaussian curvature. More precisely, the classical Minkowski problem is to given a finite Borel measure $\mu$ on the unit sphere $\mathbb{S}^{n-1}$, under what necessary and sufficient conditions does there exist a unique (up to translations) convex body (compact, convex subsets with non-empty interiors) $\Omega$ such that the surface area measure $S_\Omega$ of $\Omega$ is equal to $\mu$? The existence and uniqueness of solutions to the classical Minkowski problem have been solved. The solution of the Minkowski problem identified the conditions
 \begin{enumerate}[(i)]
        \item the measure $\mu$ is not concentrated on any great subsphere; that is,
$$
\int_{\mathbb{S}^{n-1}}|\langle\zeta , \xi\rangle| d \mu(\xi)>0, \quad \text { for each } \zeta \in \mathbb{S}^{n-1},
$$
\item the centroid of the measure $\mu$ is at the origin; that is,
$$
\int_{\mathbb{S}^{n-1}} \xi d \mu(\xi)=0,
$$
\end{enumerate}
on the measure as necessary and sufficient conditions for existence and uniqueness.

In recent years, the research of Minkowski problem has made a series of rich results. Since Lutwak \cite{LUT1993} introduced the concept of $L_p$-surface area measure, it has sparked a wave of research on $L_p$-Minkowski problems. With varying values of $L_p$-Minkowski problems give rise to several intriguing variant problems. When $p=1$, the $L_p$-Minkowski problem is the classical Minkowski problem; when $p=-n$, the $L_p$-Minkowski problem is the centro-affine Minkowski problem \cite{ZGX2015}; and when $p=0$, the $L_p$-Minkowski problem is the well-known log-Minkowski problem \cite{BLYZ2012,BLYZ2013,CFL2023,CLZ2019}. In recent years, the study of Minkowski problems has also led to some similar variant problems, such as Orlicz-Minkowski problem \cite{HLYZ2010,GHWXY1,GHWXY2}, dual Minkowski problem \cite{HLYZ2016,LNYZ2024,ZXY2018}, weighted Minkowski problem \cite{KLL2023,LWG2023,LIV2019},   Gaussian Minkowski problem \cite{HXZ2021},  Minkowski problem of electrostatic capacity \cite{JER1996, ZDX2020}, Minkowski problem of torsional rigidity \cite{CAM2010,HJR2024} and  Minkowski problem of chord measures\cite{LXYZ2024,XYZZ2023}.
For more research on Minkowski problem, we can refer to the reference by  B\"{o}r\"{o}czky, Ramos and Figalli \cite{BRF2024}, which introduces the continuity method and variational method of Minkowski problem solving, as well as the latest development of Minkowski problem. 

Minkowski problem has important research significance in convex geometry and the study of Minkowski problem has promoted the development of fully nonlinear partial differential equations \cite{BIJ1994}. In addition, the solution of $L_p$-Minkowski problem has proved the $L_p$ affine Sobolev inequality \cite{LYZ2002}, affine Moser-Trudinger  inequality and affine Morrey-Sobolev inequality \cite{CLYZ2009} play a key role.

In this paper, we will continue to study the Minkowski problem of a class of neglected measures, namely, the Minkowski problem of harmonic measures.  The study of Minkowski problem of harmonic measure can be traced back to Jerison \cite{JER1989} pioneering work. 
Let $\Omega$ be a convex subset of $\mathbb{R}^n$ that contains the origin 0. Harmonic measure for $\Omega$ at 0 is the measure $\omega$ on $\partial \Omega$ such that for all continuous functions $f$ on $\partial \Omega$,
$$
u(0)=\int_{\partial \Omega} f d \omega,
$$
where $u$ solves the Dirichlet problem:
\begin{align}\label{Eq:thbjz}
\begin{cases}
\operatorname{div}\left(\nabla u\right)=0 & \text { in } \Omega, \\ u=f &\text { on } \partial \Omega.\end{cases}
\end{align}
(If $\Omega$ is unbounded, then suppose further that $f$ tends to zero at infinity and $u$ is the unique solution that tends in zero at infinity.) Thus, for any convex open set $\Omega$ containing the origin we can define a measure $\mu$ on $\mathbb{S}^{n-1}$ by $\mu(\Omega)=\omega\left(\mathbf{g}_{\Omega}^{-1}\right)$ for all Borel sets $\Omega \subset \mathbb{S}^{n-1}$ (i.e., $\left.\mathbf{g}_*(d \omega)=d \mu\right)$.
Here, $\mathbf{g}_{\Omega}^{-1}$ denotes the inverse of the Gauss  map $\mathbf{g}_{\Omega}: \partial \Omega \rightarrow \mathbb{S}^{n-1}$, where $\mathbf{g}_{\Omega}(x)$ is the outer unit normal at $x \in \partial \Omega$. Harmonic measure has total mass 1, so, the problem of prescribing harmonic measure on a convex domain can be stated: 
\begin{problem}\label{pro1}
Given a probability measure $\mu$ on $\mathbb{S}^{n-1}$, find a convex open set $\Omega$ such that $\mathbf{g}_*(d \omega)=d \mu$.
\end{problem}
Meanwhile, Jerison \cite{JER1989} also studied the existence and uniqueness of the solution for sufficiently large integer $n$ for Problem \ref{pro1} by  the method of continuity. Then,  Jerison \cite{JER1991} further proves the existence, uniqueness and regularity of Problem \ref{pro1} for all non-negative integer $n$. As a generalization of harmonic measure, $p$-harmonic measure has received extensive attention.   In contrast to surface measures, harmonic measures, capacitary measures, due to the different choices of $p$-harmonic functions, the definition of $p$-harmonic measure on the boundary of the convex domain is not unique. Recently,  Akman and Mukherjee \cite{AKM2023} show that the choice of  $p$-harmonic functions can be maintained because the characteristics at the boundary are similar for all $p$-harmonic measures. Let $1<p<\infty$, for any convex set $\Omega \subset \mathbb{R}^{n}$, the $p$-harmonic measure $\omega_{p}$ supported on $\partial \Omega$ associated to a function $u=u_{\Omega}$ is given by
\begin{align*}
\omega_{p}(E)=\int_{\partial \Omega \cap E}|\nabla u|^{p-1} d \mathcal{H}^{n-1},
\end{align*}
for any measurable $E \subseteq \mathbb{R}^{n}$, where, $u$ vanish on $\partial K$ and is non-negative and $p$-harmonic in a neighborhood of it. Here, $\omega_{p}$ is the $p$-harmonic measure with respect to a function $u=u_{\Omega} \in W^{1, p}(\Omega \cap N)$ given by $d \omega_{p}=|\nabla u|^{p-1} d \mathcal{H}^{n-1}\llcorner\partial \Omega$, where $u$ is $p$-harmonic in $\Omega \cap N$, and satisfies
\begin{align}\label{Eq:pthhs}
\begin{cases}\operatorname{div}\left(|\nabla u|^{p-2} \nabla u\right)=0 & \text { in } \Omega \cap N, \\ u>0 & \text { in } \Omega, \\ u=0 & \text { on } \partial \Omega,\end{cases}
\end{align}
where $N$ is a neighbourhood of $\partial \Omega$, thus, $u \in W^{1, p}(N)$ upon zero extension. Up to possible reduction, the choice of $N$ is made so that $\nabla u \neq 0$ in $\Omega \cap N$ and
\begin{align*}
\|u\|_{L^{\infty}(\bar{N} \cap \Omega)}+\|\nabla u\|_{L^{\infty}(\bar{N} \cap \Omega)}<\infty,
\end{align*}
we also assume that $\partial N$ is $C^{\infty}$. For a general bounded convex set $K$, if $K=\bar{\Omega}$ for a convex domain $\Omega$, then $\mu_{K}=\mu_{\Omega}$; if the interior of $K$ is empty, i.e. $K=\partial K$, then $\mu_{K}=\left(\mathbf{g}_{K}\right)_{*} \omega_{p}$, where $\omega_{p}$ is the $p$-harmonic measure with respect to a function $u=u_{K} \in W^{1, p}(N)$ with $N$ being a neighbourhood of $K$, then, $d \omega_{p}=|\nabla u|^{p-1} d \mathcal{H}^{n-1}\llcorner K$, where $u$ is $p$-harmonic in $N$ and satisfies
\begin{align*}
\begin{cases}\operatorname{div}\left(|\nabla u|^{p-2} \nabla u\right)=0 & \text { in } N \backslash K, \\ u \geq 0 & \text { in } N, \\ u=0 & \text { on } K.\end{cases}
\end{align*}
Thus, in general, $\mu_{K}$ defined on $\mathbb{S}^{n-1}$ associated to $u=u_{K} \in W^{1, p}(N)$ as
\begin{align*}
\mu_{K}(E)=\int_{\mathbf{g}_{K}^{-1}(E)}|\nabla u|^{p-1} d \mathcal{H}^{n-1}, \quad \text { for any measurable } E \subseteq \mathbb{S}^{n-1}.
\end{align*}
If $K=\bar{\Omega}$ for a convex domain $\Omega$, then, (\ref{Eq:mkei}) coincides with the standard push forward measure (\ref{Eq:mKgs}).  On this basis, Akman and Mukherjee \cite{AKM2023} reaearched the following Minkowski problem of the $p$-harmonic measure: 
\begin{problem}\label{pro2}
Given a finite regular Borel measure $\mu$ on $\mathbb{S}^{n-1}$ satisfying conditions
 \begin{enumerate}[{\rm(i)}]
\setlength{\itemsep}{12pt}
\item $ \int_{\mathbb{S}^{n-1}}|\langle\zeta, \xi\rangle| d \mu(\xi)>0, \quad \forall \zeta \in \mathbb{S}^{n-1},$
\item $\int_{\mathbb{S}^{n-1}} \xi d \mu(\xi)=0,$
\item $\text {if } \mu(\{\xi\})>0 \text { then } \mu(\{-\xi\})=0,$
\end{enumerate}
is there a convex domain $\Omega$ such that
\begin{align}\label{Eq:mKgs}
\mu_{\Omega}=\mu?
\end{align}
Here, $\mu_{\Omega}=(g_{\Omega})_*\omega_p$ and $\omega_{p}$ is any $p$-harmonic measure on $\partial \Omega$ for $1<p<\infty$.
\end{problem}

Akman and Mukherjee \cite{AKM2023} proved the existence of the solution to Problem \ref{pro2}. Obviously, by (\ref{Eq:mKgs}), when a given finite Borel measure has a positive density $f(\xi)$ on the unit sphere $\mathbb{S}^{n-1}$ for a bounded convex domain $\Omega$ of class $C_{+}^2$ in $\mathbb{R}^n$, for  $1<p<\infty$,  then the existence of  Minkowski problem of $p$-harmonic measure in the smooth case amounts to solving the following Monge-Amp\`{e}re equation 
\begin{align}\label{eq:thmafc}
|\nabla u(F(\xi))|^{p-1} \operatorname{det}\left(h_{i j}+h \delta_{i j}\right)(\xi)=f(\xi), \quad \xi \in \mathbb{S}^{n-1}.
\end{align}
Motivated by \cite{CHZ2019,HJR2024}, to solve (\ref{eq:thmafc}), we instead consider the following normalized Monge-Amp\`{e}re equation, for  $1<p<\infty$, which is expressed as
\begin{align}\label{eq:thmaj}
|\nabla u(F(x))|^{p-1}\operatorname{det}\left(h_{i j}+h \delta_{i j}\right)(x)=f(x) \frac{ \Gamma(\Omega)}{\int_{\mathbb{S}^{n-1}} f(x) h(x) d x}, \quad x \in \mathbb{S}^{n-1},
\end{align}
where,  $\Gamma(\Omega)$ is defined by Akman and Mukherjee \cite{AKM2023}, see (\ref{Eq:gmjfbd}). 
By homogeneity, it is clear to see that, if $h(x)$ is a solution of (\ref{eq:thmaj}), then $\left[\frac{\Gamma(\Omega)}{\int_{\mathbb{S}^{n-1} f(x) h(x) d x}}\right]^{-\frac{1}{n+p-2}} h$ is a solution of (\ref{eq:thmafc}).  In other words, the solution of (\ref{eq:thmafc}) naturally arises as long as (\ref{eq:thmaj}) has a solution.
It is a very effective method to solve the existence of Minkowski problem by constructing flow  curvature. This method has been well developed and applied, see Reference \cite{CHZ2019,HJR2024, LYU2019}

In this article, we will focus on solving (\ref{eq:thmaj}) by method of the constructing  Gauss curvature flow. Let $\Omega_0$ be a smooth, origin symmetric and strictly convex body in $\mathbb{R}^n$ and $1<p<\infty$, we consider the following anisotropic Gauss curvature flow,
\begin{align}\label{eq:thgqll}
\left\{\begin{array}{l}
\frac{\partial X(x, t)}{\partial t}=- \eta(t)\frac{f(x) h(x, t)  \mathcal{K}(x, t)}{\mid \nabla u(F(x, t), t)\mid^{p-1}}\nu+X(x, t), \\
X(x, 0)=X_0(x),
\end{array}\right.
\end{align}
where
\begin{align}\label{eq:yita}
\eta(t)=\frac{ \Gamma\left(\Omega_t\right)}{\int_{\mathbb{S}^{n-1}} f(x) h(x, t) d x},\end{align}
where $h(x, t)$ is the support function of $\Omega_t$, $\mathcal{K}=\operatorname{det}\left(h_{i j}+h \delta_{i j}\right)^{-1}$ is the Guass curvature of strictly convex hypersurface $\partial \Omega_t$ parameterized by $X(x, t): \mathbb{S}^{n-1} \rightarrow \mathbb{R}^n$, and $\nu=x$ is the outer unit normal vector at $X(x, t)$.

By definition of support function, i.e., $h(x, t)=\langle X(x, t), \nu\rangle$, multiplying both sides of (\ref{eq:thgqll}) by $\nu$, we have
\begin{align}\label{eq:thgqwf}
\left\{\begin{array}{l}
\frac{\partial h(x, t)}{\partial t}=- \eta(t)\frac{f(x) h(x, t)  \mathcal{K}(x, t)}{\mid \nabla u(F(x, t), t)\mid^{p-1}}+h(x, t), \\
h(x, 0)=h_0(x).
\end{array}\right.
\end{align}
Moreover, we consider the corresponding functional $\Psi\left(\Omega_t\right)$ with respect to the flow (\ref{eq:thgqll}), which is defined as
\begin{align}\label{eq:gsqlfz}
\Psi\left(\Omega_t\right)=-\log \Gamma\left(\Omega_t\right)+\log \int_{\mathbb{S}^{n-1}} f(x) h(x, t) d x .
\end{align}

In this paper, by proving the long time existence of flow (\ref{eq:thgqll}), we get that $h$ is a solution to Monge-Amp\`{e}re equation (\ref{eq:thmaj}), further, we get the existence of a solution to Monge-Amp\`{e}re equation (\ref{eq:thmafc}), our results are as follows.
\begin{theorem}\label{thm11} 
Let $\Omega_0$ be a smooth, origin symmetric and strictly convex body in $\mathbb{R}^n$, $1<p<\infty$ and $f$ be an even, positive and smooth function on the unit sphere $\mathbb{S}^{n-1}$. Then, there exists a smooth, origin symmetric and strictly convex solution $\Omega_t$ satisfying (\ref{eq:thgqll}) for all time $t>0$. As a corollary, (\ref{eq:thmafc}) has a smooth, origin symmetric and strictly convex solution $\Omega$.
\end{theorem}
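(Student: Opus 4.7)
The plan follows the standard scheme for normalized curvature flows: short-time existence, monotonicity of an energy functional, uniform $C^{0}$ and $C^{2}$ estimates, higher regularity by Krylov--Safonov, and extraction of a stationary subsequential limit whose support function solves (\ref{eq:thmaj}). Since (\ref{eq:thgqwf}) is a fully nonlinear parabolic scalar equation on $\mathbb{S}^{n-1}$ with smooth, strictly convex initial data $h_{0}$, standard parabolic theory gives a smooth strictly convex solution on some maximal interval $[0,T^{\ast})$; the task is to show $T^{\ast}=\infty$ and pass to the limit. The first substantive step is to differentiate $\Psi(\Omega_{t})$ defined in (\ref{eq:gsqlfz}) along the flow: using the variational formula for $\Gamma$ established by Akman--Mukherjee together with the specific normalization $\eta(t)$ in (\ref{eq:yita}), I expect $\tfrac{d}{dt}\Psi(\Omega_{t})\leq 0$, with equality precisely at solutions of (\ref{eq:thmaj}). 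This monotonicity is the engine of every subsequent estimate.

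For the $C^{0}$ bound I would combine monotonicity $\Psi(\Omega_{t})\leq\Psi(\Omega_{0})$ with the homogeneity of $\Gamma$ under scaling of the body and the positivity and boundedness of $f$ to control $\int_{\mathbb{S}^{n-1}}f\,h\,dx$ from above and below uniformly in $t$. Because $f$ is even and $h_{0}$ is even, the structure of (\ref{eq:thgqwf}) preserves origin symmetry, so $h(x,t)=h(-x,t)$; combined with the integral bound a John--ellipsoid style argument yields pointwise two-sided bounds $0<c_{1}\leq h(x,t)\leq c_{2}$, preventing the body from degenerating or blowing up.

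For the $C^{2}$ estimate I would apply the maximum principle to auxiliary quantities such as $\log\lambda_{\max}(h_{ij}+h\delta_{ij})-Ah$ for an upper bound on the principal radii and $\log\det(h_{ij}+h\delta_{ij})-Bh$ for the lower bound, with $A,B$ chosen large enough to absorb the reaction terms produced by the nonlocal weight. The factor $|\nabla u|^{p-1}$ must be treated as a functional of $h$; once a $C^{0}$--$C^{1}$ envelope for $\Omega_{t}$ is in hand, quantitative boundary regularity for the $p$-harmonic potential on a convex domain of controlled inradius and outradius supplies uniform two-sided bounds for $|\nabla u|$ on $\partial\Omega_{t}$, and for its tangential derivatives after differentiating the equation. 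These bounds yield uniform parabolicity, after which Krylov--Safonov produces $C^{2,\alpha}$ estimates and Schauder bootstrap $C^{k,\alpha}$ bounds for every $k$ uniformly in $t$, forcing $T^{\ast}=\infty$.

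Finally, Arzel\`a--Ascoli extracts a subsequence $\Omega_{t_{k}}\to\Omega_{\infty}$ in $C^{\infty}$. Because $\Psi$ is non-increasing and bounded below, its time derivative is integrable in $t$, so along $t_{k}$ one may force $\tfrac{d}{dt}\Psi(\Omega_{t_{k}})\to 0$; hence $\Omega_{\infty}$ is stationary and its support function solves (\ref{eq:thmaj}). Rescaling by the factor $\bigl[\Gamma(\Omega_{\infty})/\int_{\mathbb{S}^{n-1}}f\,h\,dx\bigr]^{-1/(n+p-2)}$ then produces the desired smooth, origin-symmetric, strictly convex solution of (\ref{eq:thmafc}). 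The principal obstacle is the $C^{2}$ step: the weight $|\nabla u|^{p-1}$ is a nonlocal functional of $\Omega_{t}$ with no pointwise expression in $h$, so the maximum-principle arguments must be coupled with quantitative estimates for the $p$-Laplacian on convex domains and with careful control of how these estimates depend on the shape of $\Omega_{t}$---this coupling is where the delicate analysis lies.
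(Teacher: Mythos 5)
Your overall scheme (short-time existence, monotonicity of $\Psi$, uniform $C^0$--$C^2$ estimates, Krylov regularity, integrability of $-\frac{d}{dt}\Psi$ and extraction of a stationary limit, then rescaling by homogeneity to pass from (\ref{eq:thmaj}) to (\ref{eq:thmafc})) is exactly the paper's, and your choice of auxiliary functions for the curvature bounds is of the same type as theirs (they use $Q=-h_t/(h-\varepsilon_0)$ for the upper bound on $\mathcal{K}$ and $\log\lambda_{\max}(b_{ij})-A\log h+e^{B|\nabla h|^2}$ for the lower bound, with the nonlocal weight $|\nabla u|^{p-1}$ handled via the Lewis--Nystr\"om gradient estimate and the Akman--Mukherjee formulas for $D^2u$ restricted to $\partial\Omega_t$). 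So the route is essentially the same.

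There is, however, one concrete gap: your $C^0$ lower bound does not go through as stated. You propose to get two-sided bounds on $h$ from two-sided bounds on $\int_{\mathbb{S}^{n-1}}f h\,dx$ plus a John-ellipsoid argument, but a lower bound on $\int f h\,dx$ only controls $\max_{\mathbb{S}^{n-1}}h$ from below; an origin-symmetric body can degenerate to a thin slab with $\min h\to 0$ while $\int fh$ stays bounded away from zero, so non-degeneracy does not follow. The missing ingredient is the conservation law $\Gamma(\Omega_t)=\Gamma(\Omega_0)$ along the flow (the paper's Lemma \ref{lemlxx}, a direct consequence of the choice of $\eta(t)$), which you never invoke; it is also what makes your upper bound usable, since $\Psi(\Omega_t)\le\Psi(\Omega_0)$ only bounds $\int fh$ in terms of $\Gamma(\Omega_t)$. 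The paper's lower bound then runs: the Lewis--Nystr\"om estimate gives $|\nabla u|\le C$ on $\partial\Omega_t$ once the outradius is controlled, hence $\Gamma(\Omega_t)\le C^{p-1}\mathrm{V}(\Omega_t)$, so $\mathrm{V}(\Omega_t)\ge C^{1-p}\Gamma(\Omega_0)>0$; combined with $\mathrm{V}(\Omega_t)\le w_-(\Omega_t)\,w_+(\Omega_t)^{n-1}\le C\min_{\mathbb{S}^{n-1}}h(\cdot,t)$ for origin-symmetric bodies, this yields $\min h\ge c>0$. You should add this conservation lemma and rework the lower bound accordingly; the rest of your outline then matches the paper's argument.
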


This paper is organized as follows. First, in the Section \ref{sec2}, we state some basic knowledge about convex geometry, and the definition and basic properties of the $p$-harmonic measure. Then, in the Section \ref{sec3}, we prove the long-time existence of the flow (\ref{eq:thgqll}). Finally,  we give the proof of Theorem \ref{thm11} in the Section \ref{sec4}.

\section{Notations and Background Materials}\label{sec2}

In this section, some notations and basic facts about convex hypersurface, convex body and $p$-harmonic measures are presented. Please refer to Reference \cite{SRA2014,UJI1991} for the related properties of   convex body and convex hypersurface, and to Reference \cite[Section 2 and 3]{AKM2023} for the related properties of $p$-harmonic measures.

\subsection{Convex body and convex hypersurface }\label{subset2.1}
Let $\Omega$ be a convex body in $\mathbb{R}^{n}$, then it's uniquely determined by its support function, $h(\Omega, \cdot):\mathbb{R}^n\rightarrow\mathbb{R}$, which is defined by 
\begin{align*}
h(\Omega, x)=\max\{ \langle x, y\rangle : y\in \Omega\}, \ \ \ \ \ \ x\in \mathbb{R}^n,
\end{align*}
where $\langle\cdot, \cdot\rangle$ denotes the standard inner product of $\mathbb{R}^{n}$.

Let $\Omega$ be a compact star-shaped set (about the origin) in $\mathbb{R}^n$, then its radial function,
$\rho_\Omega=\rho(\Omega, \cdot):\mathbb{R}^n\setminus\{0\}\rightarrow[0, \infty)$, is defined by 
\begin{align*} 
\rho(\Omega, x)=\max\{\lambda\geq0: \lambda x\in K\}, \ \ \ \ \ \ x\in\mathbb{R}^n\setminus \{0\}.
\end{align*}

 Suppose that $\Omega$ is parameterized by the inverse
Gauss map $X:\mathbb{S}^{n-1}\rightarrow \Omega$, that is $X(x)=\mathbf{g}_\Omega^{-1}(x)$. Then, the support function $h$ of $\Omega$ can be computed by
\begin{align}\label{Eq:202}h(x)={\langle x, X(x) \rangle}, \ \ \ \ \ \ x\in \mathbb{S}^{n-1},\end{align}
where $x$ is the outer normal of $\Omega$ at $X(x)$.

Let $\left\{e^{1}, \ldots, e^{n-1}\right\}$ denote the orthonormal frame field  of $\mathbb{S}^{n-1}$ so that for any $\xi \in \mathbb{S}^{n-1}$, the tangent space $T_{\xi}\left(\mathbb{S}^{n-1}\right)$ is spanned by $\left\{e^{i}(\xi)\right\}$.  Let $\nabla$ denote the gradient on the sphere $\mathbb{S}^{n-1}$.  Differentiating (\ref{Eq:202}), there has
\begin{align*}
\nabla_ih=\langle\nabla_ix,X(x)\rangle+\langle x,\nabla_iX(x)\rangle,\end{align*}
since $\nabla_iX(x)$ is tangent to $\Omega$ at $X(x)$, thus,
\begin{align*}
\nabla_ih=\langle\nabla_ix,X(x)\rangle.
\end{align*}

By differentiating (\ref{Eq:202}) twice, the second fundamental form $A_{ij}$ of $\Omega$ can be computed in terms of the support function,
\begin{align}\label{Eq:203}
A_{ij} = \nabla_{ij}h + \delta_{ij}h,\end{align}
where $\nabla_{ij}=\nabla_i\nabla_j$ denotes the second order covariant derivative with respect to $\delta_{ij}$. The induced metric matrix $g_{ij}$ of $\Omega$ can be derived by Weingarten's formula,
\begin{align}\label{Eq:204}
\delta_{ij}=\langle\nabla_ix, \nabla_jx\rangle= A_{ik}A_{lj}g^{kl}.\end{align}
The principal radii of curvature are the eigenvalues of the matrix $b_{ij} = A^{ik}g_{jk}$.
When considering a smooth local orthonormal frame on $\mathbb{S}^{n-1}$, by virtue of (\ref{Eq:203})
and (\ref{Eq:204}), there has
\begin{align}\label{Eq:205}
b_{ij} = A_{ij} = \nabla_{ij}h + h\delta_{ij}.
\end{align}
The inverse $b^{i j}$ of $b_{ij} $ represents the principal curvature of $\partial \Omega_t$. The Gauss curvature of $X(x)\in\Omega$ is given by
\begin{align}\label{Eq:206}
\mathcal{K}(x) = (\det (\nabla_{ij}h + h\delta_{ij} ))^{-1}.
\end{align}
\subsection{$p$-harmonic measure}
Given any convex set $\Omega \subset \mathbb{R}^{n}$ and  $1<p<\infty$, the $p$-harmonic measure $\omega_{p}$ supported on $\partial \Omega$ associated to a function $u=u_{\Omega}$ is given by (see \cite{AKM2023})
\begin{align*}
\omega_{p}(E)=\int_{\partial \Omega \cap E}|\nabla u|^{p-1} d \mathcal{H}^{n-1},
\end{align*}
for any measurable $E \subseteq \mathbb{R}^{n}$, where $u$ vanish on $\partial K$ and is non-negative and $p$-harmonic in a neighborhood of it. Here, for  $1<p<\infty$, $\omega_{p}$ is the $p$-harmonic measure with respect to a function $u=u_{\Omega} \in W^{1, p}(\Omega \cap N)$ given by $d \omega_{p}=|\nabla u|^{p-1} d \mathcal{H}^{n-1}\llcorner\partial \Omega$, where $u$ is $p$-harmonic in $\Omega \cap N$ and satisfies
\begin{align*} 
\begin{cases}
\operatorname{div}\left(|\nabla u|^{p-2} \nabla u\right)=0 & \text { in } \Omega \cap N, \\ 
u>0 & \text { in } \Omega,\\ 
u=0 & \text { on } \partial \Omega,
\end{cases}
\end{align*}
where $N$ is a neighbourhood of $\partial \Omega$; thus, $u \in W^{1, p}(N)$ upon zero extension. Up to possible reduction, the choice of $N$ is made so that $\nabla u \neq 0$ in $\Omega \cap N$ and
\begin{align*}
\|u\|_{L^{\infty}(\bar{N} \cap \Omega)}+\|\nabla u\|_{L^{\infty}(\bar{N} \cap \Omega)}<\infty
\end{align*}
and we also assume that $\partial N$ is $C^{\infty}$. For a general bounded convex set $K$, if $K=\bar{\Omega}$ for a convex domain $\Omega$, then, $\mu_{K}=\mu_{\Omega}$; if $K$ is of empty interior, i.e. $K=\partial K$, then, $\mu_{K}=\left(\mathbf{g}_{K}\right)_{*} \omega_{p}$, where $\omega_{p}$ is the $p$-harmonic measure with respect to a function $u=u_{K} \in W^{1, p}(N)$ with $N$ being a neighbourhood of $K$, given by $d \omega_{p}=|\nabla u|^{p-1} d \mathcal{H}^{n-1}\llcorner K$ where $u$ is $p$-harmonic in $N$ and satisfies
\begin{align*}
\begin{cases}
\operatorname{div}\left(|\nabla u|^{p-2} \nabla u\right)=0 & \text { in } N \backslash K, \\
 u \geq 0 & \text { in } N, \\ u=0 & \text { on } K.
\end{cases}
\end{align*}
Thus, in general, $\mu_{K}$ defined on $\mathbb{S}^{n-1}$ associated to $u=u_{K} \in W^{1, p}(N)$ as
\begin{align}\label{Eq:mkei}
\mu_{K}(E)=\int_{\mathbf{g}_{K}^{-1}(E)}|\nabla u|^{p-1} d \mathcal{H}^{n-1}, \quad \text { for any measurable } E \subseteq \mathbb{S}^{n-1}.
\end{align}

Let $\Omega$ be strongly convex domain of class $C_{+}^{2, \alpha}$ so that $\mathbf{g}_{\Omega}: \partial \Omega \rightarrow \mathbb{S}^{n-1}$ is a diffeomorphism. Let its support function be $h_{\Omega}$.  Akman and Mukherjee \cite{AKM2023} obtained
\begin{align}\label{Eq:mcdjs}
d \mu_{\Omega}=\left|\nabla u\left(F_{\Omega}(\xi)\right)\right|^{p-1} d \mathcal{H}^{n-1}\left\llcorner\partial \Omega=\left|\nabla u\left(F_{\Omega}(\xi)\right)\right|^{p-1} \operatorname{det}(\nabla_{ij}h(\xi) + h(\xi)\delta_{ij} ) \right.,
\end{align}
where $F_{\Omega}(\xi):=\mathbf{g}_{\Omega}{ }^{-1}(\xi)=\nabla h_{\Omega}(\xi)$.
Henceforth, we shall denote $h=h_{\Omega}$ and $F=F_{\Omega}$. Thus,
\begin{align}
h(\xi)=h_{\Omega}(\xi)=\left\langle\xi, \mathbf{g}_{\Omega}{ }^{-1}(\xi)\right\rangle=\left\langle x, \mathbf{g}_{\Omega}(x)\right\rangle,
\end{align}
 and
\begin{align}\label{Eq:fgh}
F(\xi)=F_{\Omega}(\xi)=\mathbf{g}_{\Omega}^{-1}(\xi)=\nabla h(\xi).
\end{align}
Sometimes, without causing confusion, we overuse the notation, we use $h_{i j}=\nabla_{ij}h$ denotes the second order covariant derivatives of $h(\Omega, \cdot)$ on $\mathbb{S}^{n-1}$ with respect to a local orthonormal frame. Then, (see, \cite[Page 258]{HJR2024})
\begin{align}\label{Eq:fgh1}
\nabla h_{\Omega}(\xi)=\sum_i h_i e^i+h \xi, \quad F_i(\xi)=\sum_j\operatorname{det}(\nabla_{ij}h(\xi) + h(\xi)\delta_{ij} ) e^j.
\end{align}
Since any $\xi\in\mathbb{S}^{n-1}$ is the outerunit normal at $F(\xi)\in\partial \Omega$ as $F=\mathbf{g}_{\Omega}^{-1}(\xi)$, thus, we have
\begin{align}\label{Eq:tdxld}
\xi=-\nabla u(F(\xi))/|\nabla u(F(\xi))|.
\end{align}

For any integrable function $f: \partial \Omega \rightarrow \mathbb{R}$, Akman and Mukherjee \cite{AKM2023} obtained
\begin{align*}
\int_{\partial \Omega} f(x) d \omega_{p}(x)=\int_{\mathbb{S}^{n-1}} f\left(\mathbf{g}_{\Omega}^{-1}(\xi)\right)\left|\nabla u\left(\mathbf{g}_{\Omega}^{-1}(\xi)\right)\right|^{p-1} \operatorname{det}(\nabla_{ij}h(\xi) + h\delta_{ij}(\xi) ) d \xi,
\end{align*}
and defined the following functional $\Gamma(K)$ for the harmonic measure of a convex set $K$
\begin{align}\label{Eq:gmjfbd}
\Gamma(K)=\int_{\mathbb{S}^{n-1}} h_{K}(\xi) d \mu_{K}(\xi),
\end{align}
and if  $\bar{\Omega}=K$, then, $\Gamma(\Omega)=\Gamma(K)$, and $\Gamma\left(K_{j}\right) \rightarrow \Gamma(K)$ uniformly if $d_{\mathcal{H}}\left(K_{j}, K\right) \rightarrow 0^{+}$as $j \rightarrow \infty$.

Hence, by (\ref{Eq:mcdjs}) and (\ref{Eq:gmjfbd}), we have
\begin{align*}
\Gamma(K)=\int_{\mathbb{S}^{n-1}} h_{K}(\xi)|\nabla u(F(\xi))|^{p-1} \operatorname{det}(\nabla_{ij}h(\xi) + h(\xi)\delta_{ij} ) d(\xi).
\end{align*}

\section{Long-time existence of the flow}\label{sec3}
In this section, we aim to get the long-time existence of the solution of the flow (\ref{eq:thgqll}), which means that we get that the flow (\ref{eq:thgqll}) is uniformly parabolic. The key is to derive the upper and lower bounds of the principal curvature. Therefore, we need to build a $C^0$, $C^1$ estimate for the flow (\ref{eq:thgqll}). First, we show that $\Gamma(\Omega_t)$ is unchanged along the flow (\ref{eq:thgqll}).

\begin{lemma}\label{lemlxx} 
For  $1<p<\infty$,  let $\Omega_t$ be a smooth, origin-symmetric and strictly convex solution satisfying the  flow (\ref{eq:thgqll}) in $\mathbb{R}^n$. Then, $\Gamma(\Omega_t)$ is unchanged  along the flow (\ref{eq:thgqll}), i.e.
\begin{align*}
\Gamma(\Omega_t)=\Gamma(\Omega_0).
\end{align*}
\end{lemma}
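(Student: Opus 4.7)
The plan is to differentiate $\Gamma(\Omega_t)$ in $t$ and show the result is identically zero because of the way $\eta(t)$ was chosen. The key ingredient is the first-variation formula for the functional $\Gamma$, derived in the course of the Akman--Mukherjee approach to the $p$-harmonic Minkowski problem: along any smooth family of strictly convex bodies $\Omega_t$ with support functions $h(\cdot,t)$, one has
\[
\frac{d}{dt}\Gamma(\Omega_t)\;=\; c_{n,p}\int_{\mathbb{S}^{n-1}}\frac{\partial h}{\partial t}(x,t)\,d\mu_{\Omega_t}(x),
\]
where $c_{n,p}\ne 0$ is a dimensional constant whose exact value ($n+1-p$, recoverable from the homogeneity $\Gamma(\lambda K)=\lambda^{n+1-p}\Gamma(K)$) will play no role below.

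Granted this formula, I would plug in the evolution equation (\ref{eq:thgqwf}) for $\partial_t h$ and use the representation $d\mu_{\Omega_t}(x)=|\nabla u(F(x,t),t)|^{p-1}\det(h_{ij}+h\delta_{ij})(x)\,dx$ from (\ref{Eq:mcdjs}) together with the identity $\mathcal{K}\cdot\det(h_{ij}+h\delta_{ij})=1$ from (\ref{Eq:206}). The two factors of $|\nabla u(F)|^{p-1}$ cancel against one another, as do $\mathcal{K}$ and the determinant, leaving
\[
\int_{\mathbb{S}^{n-1}}\frac{\partial h}{\partial t}\,d\mu_{\Omega_t}\;=\;-\eta(t)\int_{\mathbb{S}^{n-1}}f(x)\,h(x,t)\,dx\;+\;\int_{\mathbb{S}^{n-1}}h(x,t)\,d\mu_{\Omega_t}(x)\;=\;-\eta(t)\int_{\mathbb{S}^{n-1}}fh\,dx\,+\,\Gamma(\Omega_t).
\]
Substituting the definition (\ref{eq:yita}) of $\eta(t)$ makes the right-hand side vanish identically, so $\frac{d}{dt}\Gamma(\Omega_t)\equiv 0$, and integrating in $t$ yields $\Gamma(\Omega_t)=\Gamma(\Omega_0)$.

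The only non-mechanical step is invoking the first variation of $\Gamma$; everything else is algebraic cancellation baked into the definitions of the flow (\ref{eq:thgqll}) and of $\eta(t)$ precisely so that this invariance would hold. Accordingly I do not foresee any real obstacle beyond quoting the appropriate variational statement from \cite{AKM2023}.
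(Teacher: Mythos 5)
Your proposal is correct and follows essentially the same route as the paper: differentiate $\Gamma(\Omega_t)$ via the Hadamard-type first variation $\tfrac{d}{dt}\Gamma(\Omega_t)\propto\int_{\mathbb{S}^{n-1}}\partial_t h\,d\mu_{\Omega_t}$, substitute (\ref{eq:thgqwf}), cancel $|\nabla u|^{p-1}$ against $|\nabla u|^{1-p}$ and $\mathcal{K}$ against $\det(h_{ij}+h\delta_{ij})$, and observe that the definition (\ref{eq:yita}) of $\eta(t)$ makes the result vanish. The only cosmetic difference is that you carry a homogeneity constant $c_{n,p}$ in the variational formula while the paper writes it without one; since the integral itself is shown to be zero, this has no effect on the conclusion.
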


\begin{proof}
Let $h(\cdot,t)$   be the support function of $\Omega_t$, we have
\begin{align*}
\frac{d}{dt}\Gamma(\Omega_t)=& \int_{\mathbb{S}^{n-1}}  \frac{\partial  h}{\partial t} (x,t)  d \mu\left(\Omega_0, u\right)\\
=& \int_{\mathbb{S}^{n-1}}  \frac{\partial  h}{\partial t} (x,t) |\nabla u|^{1-p} \det(\nabla_{ij}h + h\delta_{ij})dx\\
=& \int_{\mathbb{S}^{n-1}} \bigg(-\eta(t)|\nabla u|^{1-p} \mathcal{K}(x, t)h(x, t) f(x)+h(x, t)\bigg)|\nabla u|^{p-1} \mathcal{K}^{-1}(x)dx\\
=&0.
\end{align*}
This ends the proof of Lemma \ref{lemlxx}.
\end{proof}

The following lemma shows that the functional  $\Psi(\Omega_t)$ is non-increasing along the flow (\ref{eq:thgqll}).
 
\begin{lemma}\label{lemddx}
For  $1<p<\infty$,   let $\Omega_t$ be a smooth, origin-symmetric and strictly convex solution satisfying the  flow (\ref{eq:thgqll}) in $\mathbb{R}^n$. Then, the functional (\ref{eq:gsqlfz}) is non-increasing along the flow (\ref{eq:thgqll}). Namely, 
$$\frac{d}{dt}\Psi(\Omega_t)\leq0,$$
 and the equality holds if and only if $\Omega_t$ satisfy (\ref{eq:thmaj}).
\end{lemma}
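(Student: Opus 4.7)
The plan is to exploit Lemma \ref{lemlxx}, which already tells us that $\Gamma(\Omega_t)$ is constant along the flow, so $\frac{d}{dt}[-\log\Gamma(\Omega_t)] = 0$ and only the term $\log\int_{\mathbb{S}^{n-1}} fh\, dx$ needs to be differentiated. I would plug the evolution equation (\ref{eq:thgqwf}) for $h_t$ into $\frac{d}{dt}\int fh\, dx$ and use the definition (\ref{eq:yita}) of $\eta(t)$ to arrive at
\begin{equation*}
\frac{d}{dt}\Psi(\Omega_t) \;=\; 1 \;-\; \frac{\Gamma(\Omega_t)\int_{\mathbb{S}^{n-1}} f^{2}h\,\mathcal{K}\,|\nabla u|^{-(p-1)}\, dx}{\bigl(\int_{\mathbb{S}^{n-1}} fh\, dx\bigr)^{2}}.
\end{equation*}

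Next, using $\mathcal{K}^{-1}=\det(h_{ij}+h\delta_{ij})$ together with the expression for $\Gamma$ displayed just after (\ref{Eq:gmjfbd}), I would rewrite
\begin{equation*}
\Gamma(\Omega_t) \;=\; \int_{\mathbb{S}^{n-1}} h\,|\nabla u|^{p-1}\,\mathcal{K}^{-1}\, dx,
\end{equation*}
and then recognize the Cauchy--Schwarz factorization
\begin{equation*}
f(x)h(x) \;=\; \sqrt{\,h\,|\nabla u|^{p-1}\,\mathcal{K}^{-1}\,}\;\cdot\;\sqrt{\,f^{2}h\,\mathcal{K}\,|\nabla u|^{-(p-1)}\,}.
\end{equation*}
The Cauchy--Schwarz inequality then gives $\bigl(\int fh\, dx\bigr)^{2}\leq \Gamma(\Omega_t)\cdot\int f^{2}h\,\mathcal{K}\,|\nabla u|^{-(p-1)}\, dx$, which at once yields $\frac{d}{dt}\Psi(\Omega_t)\leq 0$.

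For the equality case, the Cauchy--Schwarz equality condition forces the two factors under the square roots to be pointwise proportional, i.e.\ $|\nabla u|^{p-1}\mathcal{K}^{-1}=c\,f$ on $\mathbb{S}^{n-1}$ for some constant $c>0$, equivalently $|\nabla u(F(x))|^{p-1}\det(h_{ij}+h\delta_{ij})(x)=c\,f(x)$. Multiplying by $h(x)$ and integrating over $\mathbb{S}^{n-1}$ identifies $c = \Gamma(\Omega_t)/\int_{\mathbb{S}^{n-1}} fh\, dx$, which is precisely equation (\ref{eq:thmaj}). There is no real obstacle here; the only subtlety is cosmetic, namely choosing the Cauchy--Schwarz splitting so that the cross-term reproduces $fh$ and the proportionality constant agrees with the normalization in (\ref{eq:thmaj}). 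The underlying computation of $\int f\,h_t\, dx$ mirrors the one used in the proof of Lemma \ref{lemlxx}.
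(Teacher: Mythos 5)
Your proposal is correct, and it reaches the same end formula as the paper by a slightly different route. The paper does not discard the $-\log\Gamma(\Omega_t)$ term via Lemma \ref{lemlxx}; instead it keeps $-\frac{1}{\Gamma(\Omega_t)}\int_{\mathbb{S}^{n-1}}|\nabla u|^{p-1}\mathcal{K}^{-1}\partial_t h\,dx$ in play, combines the two terms of $\frac{d}{dt}\Psi$ into a single integral of the form $\int \partial_t h\cdot(\cdots)\,dx$, and observes that the bracket is $-\partial_t h$ divided by the positive weight $\Gamma(\Omega_t)|\nabla u|^{-(p-1)}\mathcal{K}h$, so the derivative is manifestly $-\int (\partial_t h)^2/(\text{positive})\,dx\le 0$ with equality iff $\partial_t h\equiv 0$. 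Your version first invokes the conservation of $\Gamma$ to reduce to $\frac{d}{dt}\Psi = 1-\Gamma(\Omega_t)\int f^2h\mathcal{K}|\nabla u|^{-(p-1)}dx\big/\bigl(\int fh\,dx\bigr)^2$ and then applies Cauchy--Schwarz with the splitting $fh=\sqrt{h|\nabla u|^{p-1}\mathcal{K}^{-1}}\cdot\sqrt{f^2h\mathcal{K}|\nabla u|^{-(p-1)}}$; expanding the paper's square shows the two expressions are algebraically identical, so the approaches are equivalent in substance. The pointwise complete-the-square argument has the small advantage of reading off the equality case directly as $\partial_t h=0$ (i.e.\ the flow is stationary), whereas your Cauchy--Schwarz route needs the extra (but correctly executed) step of identifying the proportionality constant as $\Gamma(\Omega_t)/\int fh\,dx$ by integrating against $h$; conversely, your version makes transparent exactly where the monotonicity comes from and why the constancy of $\Gamma$ matters. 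Both arguments rely, through Lemma \ref{lemlxx} or directly, on the Hadamard-type variational formula $\frac{d}{dt}\Gamma(\Omega_t)=\int_{\mathbb{S}^{n-1}}\partial_t h\,|\nabla u|^{p-1}\mathcal{K}^{-1}dx$, which is the only non-elementary input.
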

\begin{proof}
By (\ref{eq:gsqlfz}), (\ref{eq:thgqwf}), (\ref{eq:yita}) and H\"{o}lder inequality, we have
\begin{align*}
& \frac{d}{dt}\Psi(\Omega_t)\\
=&-\frac{1}{\Gamma(\Omega_t)}\int_{\mathbb{S}^{n-1}}|\nabla u|^{p-1}\mathcal{K}^{-1} \frac{\partial  h}{\partial t}dx+\frac{\int_{\mathbb{S}^{n-1}} f(x) \frac{\partial  h}{\partial t} d x}{\int_{\mathbb{S}^{n-1}} f(x) h(x, t) d x} \\
=&\int_{\mathbb{S}^{n-1}}\frac{-h \frac{\partial  h}{\partial t}}{\Gamma(\Omega_t)|\nabla u|^{-(p-1)}\mathcal{K}h}dx+\frac{\int_{\mathbb{S}^{n-1}} f(x) \frac{\partial  h}{\partial t} d x}{\int_{\mathbb{S}^{n-1}} f(x) h(x, t) d x} \\
=&\int_{\mathbb{S}^{n-1}}\partial_th\bigg(\frac{-h}{\Gamma(\Omega_t)|\nabla u|^{-(p-1)}\mathcal{K}h}+\frac{f(x)}{\int_{\mathbb{S}^{n-1}} f(x) h(x, t) d x}\bigg)dx \\
=&\int_{\mathbb{S}^{n-1}}\partial_th\bigg(\frac{-h+\frac{f(x)\Gamma(\Omega_t)|\nabla u|^{-(p-1)}\mathcal{K}h}{\int_{\mathbb{S}^{n-1}} f(x) h(x, t) d x}}{\Gamma(\Omega_t)|\nabla u|^{-(p-1)}\mathcal{K}h}\bigg)dx \\
=&\int_{\mathbb{S}^{n-1}}\partial_th\bigg(\frac{-h+\frac{f(x)\Gamma(\Omega_t)|\nabla u|^{-(p-1)}\mathcal{K}h}{\int_{\mathbb{S}^{n-1}} f(x) h(x, t) d x}}{\Gamma(\Omega_t)|\nabla u|^{-(p-1)}\mathcal{K}h}\bigg)dx \\
=&-\int_{\mathbb{S}^{n-1}}\frac{\bigg(-\frac{f(x)\Gamma(\Omega_t)|\nabla u|^{-(p-1)}\mathcal{K}h}{\int_{\mathbb{S}^{n-1}} f(x) h(x, t) d x}+h\bigg)^2}{\Gamma(\Omega_t)|\nabla u|^{-(p-1)}\mathcal{K}h}dx \\
 \leq& 0,
\end{align*}
with equality if and only if $\partial_th=0$, i.e.,
$$
\eta(t)\frac{f(x) h(x, t)  \mathcal{K}(x, t)}{\mid \nabla u(F(x, t), t)\mid^{p-1}}=h(x, t),
$$
which illustrates that $\Omega_t$ satisfies (\ref{eq:thmaj}). The proof is completed.
\end{proof}

Next, we aim to establish the $C^0, C^1$ estimates for the solution to  flow (\ref{eq:thgqll}).  To do this, we need the following lemma.

\begin{lemma}\cite[Lemma 4.28]{LNM2010}\label{lemtdyj}
 Let $\Omega \subset \mathbb{R}^n$ be a bounded Lipschitz domain. Given $1<p<\infty$, $w \in \partial \Omega$, $0<r<r_0$, suppose that $u$ is a positive $p$-harmonic function in $\Omega \cap B(w, 2 r)$. Assume also that $u$ is continuous in $\bar{\Omega} \cap \bar{B}(w, 2 r)$ and $u=0$ on $\Delta(w, 2 r)$. Then, there exist $\xi \in \partial B(0,1)$ and $c_3, \delta_{+}>1$, both of which only depend on $p, n$ and $M$ such that
$$
\delta_{+}^{-1} \frac{u(x)}{d(x, \partial \Omega)} \leq\langle\nabla u(x), \xi\rangle \leq|\nabla u(x)| \leq \delta_{+} \frac{u(x)}{d(x, \partial \Omega)},
$$
whenever $x \in \Omega \cap B\left(w, r / c_3\right)$. Moreover, $\xi$ can be chosen independently of $u$.
\end{lemma}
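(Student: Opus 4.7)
The lemma consists of a trivial middle inequality (Cauchy--Schwarz with $|\xi|=1$) and two nontrivial outer bounds: an upper bound $|\nabla u(x)| \le \delta_+\, u(x)/d(x,\partial \Omega)$ and a lower bound $\langle \nabla u(x), \xi\rangle \ge \delta_+^{-1}\, u(x)/d(x,\partial \Omega)$ for a unit vector $\xi$ that depends on $n, p, M$ but not on $u$. The plan is to handle the two outer bounds by separate mechanisms: the upper bound is purely a matter of interior regularity for the $p$-Laplacian, while the lower bound, and the universality of $\xi$, rest on barrier comparisons inside the Lipschitz cone at $w$ together with the nonlinear boundary Harnack principle.

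For the upper bound I would fix $x \in \Omega \cap B(w, r/c_3)$ and set $\rho = d(x, \partial \Omega)$. When $c_3$ is taken large enough, $B(x,\rho) \subset \Omega \cap B(w, 2r)$, so $u$ is a positive $p$-harmonic function on $B(x, \rho/2)$. The interior Lipschitz gradient estimate of DiBenedetto--Tolksdorf--Lieberman then gives $|\nabla u(x)| \le C(n,p)\, \rho^{-1}\|u\|_{L^\infty(B(x,\rho/2))}$, and the classical Harnack inequality for non-negative $p$-harmonic functions yields $\|u\|_{L^\infty(B(x,\rho/2))} \le C(n,p)\, u(x)$. Combining the two produces the upper bound with a constant depending only on $n$ and $p$.

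For the lower bound I would exploit the Lipschitz character of $\partial \Omega$: at every boundary point $w$ there is a truncated open cone $\mathcal{C}_w$ with vertex $w$, axis $\xi$ and fixed aperture $\theta(M) > 0$ lying inside $\Omega \cap B(w, 2r)$, and up to a rotation the direction $\xi$ can be regarded as depending only on $M$. Inside $\mathcal{C}_w$ I would compare $u$ to a positive $p$-harmonic barrier $v$ of the form $|x-w|^{\alpha}\varphi(\omega)$, where $\alpha = \alpha(n,p,\theta)$ and $\varphi$ is the first nonlinear angular eigenfunction on the spherical cross-section of the cone; $v$ vanishes on the lateral boundary of $\mathcal{C}_w$ and satisfies $\langle \nabla v, \xi\rangle \ge c\, v/d(\cdot, \partial \mathcal{C}_w)$ at interior points. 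The boundary Harnack principle of Lewis--Nystr\"om for the $p$-Laplacian on Lipschitz domains then forces $u$ and $v$ to be comparable on $\mathcal{C}_w \cap B(w, r/c_3)$, and differentiating this comparison transfers the directional non-degeneracy of $v$ to $u$.

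The hard part of the argument will be the nonlinear boundary Harnack inequality that underlies the lower bound. Because the linearization of the $p$-Laplace operator at $\nabla u$ has ellipticity constants that degenerate wherever $|\nabla u| = 0$, the linear techniques that give the classical boundary Harnack inequality do not transplant directly, and one must appeal to the Carleson-type estimates and nonlinear boundary Harnack machinery developed in \cite{LNM2010}. Once that machinery is in hand, the rest of the proof is the routine combination of interior gradient estimates, explicit cone barriers, and the comparison principle sketched above.
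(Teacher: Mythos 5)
The paper does not prove this statement: it is imported verbatim as Lemma 4.28 of Lewis and Nystr\"om \cite{LNM2010} and used as a black box, so the only meaningful comparison is between your sketch and the argument actually carried out in that reference. Your treatment of the two easy pieces is fine: the middle inequality is Cauchy--Schwarz, and the upper bound $|\nabla u(x)|\le \delta_+\,u(x)/d(x,\partial\Omega)$ does follow from the interior Lipschitz estimate on $B(x,d(x,\partial\Omega)/2)$ combined with the Harnack inequality, exactly as you describe.

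The lower bound is where the proposal breaks, in two places. First, the claimed two-sided comparability of $u$ with the homogeneous cone barrier $v$ on $\mathcal{C}_w\cap B(w,r/c_3)$ is not available: $u$ and $v$ do not vanish on the same boundary portion ($u$ vanishes on $\Delta(w,2r)\subset\partial\Omega$ while $v$ vanishes on the lateral boundary of the cone, where $u$ is strictly positive), so the boundary Harnack principle does not apply to the pair; the comparison principle yields only the one-sided bound $u\gtrsim v$, and $u\le Cv$ is simply false near the lateral boundary where $v\to 0$. Second, and more fundamentally, ``differentiating the comparison'' is not a legitimate step: comparability of two positive solutions gives no pointwise information about their gradients, and transferring gradient nondegeneracy from a model solution to $u$ is precisely the hard content of the lemma, not a corollary of boundary Harnack. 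The actual proof in \cite{LNM2010} is of a different nature: one establishes $|\nabla u|\asymp u/d$ together with the cone condition on $\nabla u$ for smooth approximating domains by a deformation/continuity argument starting from a model configuration, the key observation being that wherever $\lambda^{-1}u/d\le|\nabla u|\le\lambda u/d$ already holds the linearization of the $p$-Laplacian at $\nabla u$ is uniformly elliptic with constants controlled by $\lambda$, so the directional derivatives $\langle\nabla u,\xi\rangle$, which solve that linear equation, satisfy Harnack-type estimates that allow the constant to be propagated; one then passes to the limit in the approximation. Your sketch contains neither the linearized equation nor the continuity scheme, and the barrier-plus-boundary-Harnack route cannot be repaired to replace them.
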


\begin{lemma}\label{lemco} 
Let $f$ be an even, smooth and positive function on ${\mathbb{S}^{n-1}}$, $1<p<\infty$ and $\Omega_{t}$ be a smooth, origin-symmetric and strictly convex solution satisfying the  flow (\ref{eq:thgqll}). Then
\begin{align}\label{eq:zcC0}
\frac{1}{C}\leq h(x,t)\leq C, \ \forall (x,t)\in {{\mathbb{S}^{n-1}}}\times (0,+\infty),
\end{align}
and
\begin{align}\label{eq:jxC0}
\frac{1}{C}\leq \rho(u,t)\leq C, \ \forall (u,t)\in {{\mathbb{S}^{n-1}}}\times (0,+\infty).
\end{align}
Here $h(x,t)$ and $\rho(u,t)$ are the support function and the radial function of $\Omega_{t}$, respectively.
\end{lemma}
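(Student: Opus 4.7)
The plan is to combine the two monotonicity facts just established — constancy of $\Gamma(\Omega_t)$ (Lemma \ref{lemlxx}) and non-increase of $\Psi(\Omega_t)$ (Lemma \ref{lemddx}) — with the origin symmetry of $\Omega_t$ that is preserved along (\ref{eq:thgqll}) (since $f$ is even and $\Omega_0$ is origin-symmetric). I will first bound $\max_x h(x,t)$ from above using $\Psi$ monotonicity, and then bound $\min_x h(x,t)$ from below by a Hausdorff-compactness argument based on the conservation of $\Gamma$. The bounds on $\rho$ in (\ref{eq:jxC0}) then follow at once, since for any origin-symmetric convex body $\min_x h \leq \rho(u) \leq \max_x h$ for every $u\in\mathbb{S}^{n-1}$.

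For the upper bound, combining Lemmas \ref{lemlxx} and \ref{lemddx} cancels $\Gamma$ out of $\Psi$ and yields $\int_{\mathbb{S}^{n-1}} f\,h(\cdot,t)\,dx \leq \int_{\mathbb{S}^{n-1}} f\,h_0\,dx$. Let $R(t)=\max_x h(x,t)$ be attained at some $x_t\in\mathbb{S}^{n-1}$, and pick $y_t\in\partial\Omega_t$ with $\langle y_t,x_t\rangle=R(t)$, so $|y_t|\geq R(t)$. Origin symmetry gives $[-y_t,y_t]\subset\Omega_t$, hence $h(x,t)\geq |\langle y_t,x\rangle|$ for all $x\in\mathbb{S}^{n-1}$. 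Multiplying by $f$, integrating, and using $\min f>0$ together with the rotation-invariant constant $c_n:=\int_{\mathbb{S}^{n-1}}|\langle e,x\rangle|\,dx$, one obtains $c_n(\min f)\,R(t)\leq \int f\,h_0\,dx$, which gives the upper bound on $h$ in (\ref{eq:zcC0}).

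For the lower bound, suppose towards a contradiction that there exist $t_k$ and $x_k\in\mathbb{S}^{n-1}$ with $h(x_k,t_k)\to 0$. Since $R(t)\leq C$, the $\Omega_{t_k}$ are uniformly bounded, so Blaschke's selection theorem produces a Hausdorff subsequential limit $\Omega^\ast$ that is origin-symmetric with $h_{\Omega^\ast}(x^\ast)=0$ for some $x^\ast\in\mathbb{S}^{n-1}$; by symmetry $h_{\Omega^\ast}(-x^\ast)=0$ as well, so $\Omega^\ast$ lies in a hyperplane through the origin and has empty interior. The Hausdorff continuity of $\Gamma$ stated immediately after (\ref{Eq:gmjfbd}), combined with Lemma \ref{lemlxx}, forces $\Gamma(\Omega^\ast)=\Gamma(\Omega_0)>0$. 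On the other hand, for such a degenerate $\Omega^\ast$ the $p$-harmonic measure set up in \cite{AKM2023} concentrates in the directions normal to the affine hull of $\Omega^\ast$, which are precisely the directions in which $h_{\Omega^\ast}$ vanishes, so $\Gamma(\Omega^\ast)=\int h_{\Omega^\ast}\,d\mu_{\Omega^\ast}=0$. This contradiction yields $\min_x h(x,t)\geq 1/C$.

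The main obstacle is the last step — verifying $\Gamma(\Omega^\ast)=0$ on convex sets of empty interior. The geometric picture is transparent, but it has to be read off from the precise behaviour of the $p$-harmonic measure and the functional $\Gamma$ on degenerate limits developed in \cite{AKM2023}. A possible quantitative substitute, avoiding taking limits altogether, is an a priori inequality of the form $\Gamma(\Omega)\leq C\,r(\Omega)^{\alpha}$ with $\alpha>0$ for origin-symmetric convex bodies of uniformly bounded circumradius; together with the conservation of $\Gamma$ this would directly force $r(t)\geq c>0$ and bypass the compactness argument.
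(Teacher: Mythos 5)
Your upper bound is the paper's argument almost verbatim: monotonicity of $\Psi$ plus constancy of $\Gamma$ controls $\int_{\mathbb{S}^{n-1}} f\,h(\cdot,t)\,dx$, and origin symmetry gives $h(x,t)\geq \rho_{\max}(t)\,|\langle x,e_1\rangle|$, whence $\rho_{\max}(t)\leq C$ (your use of the absolute value is in fact the correct reading of the paper's display, where $\int_{\mathbb{S}^{n-1}}\langle x,e_1\rangle\,dx$ as written would vanish). For the lower bound you take a genuinely different route, and the gap you yourself flag is real: the claim $\Gamma(\Omega^\ast)=0$ for a degenerate Hausdorff limit, together with the continuity of $\Gamma$ up to such limits, is exactly the point that would need to be extracted from \cite{AKM2023}, and your soft compactness argument does not establish it. The paper instead runs precisely the ``quantitative substitute'' you propose in your last sentence, with exponent $\alpha=1$: by Lemma \ref{lemtdyj} the gradient $|\nabla u|$ is bounded above by $\delta_{+}\,u/d(\cdot,\partial\Omega_t)\leq C$ uniformly in $t$ (using the already-established circumradius bound), so
\begin{align*}
\Gamma(\Omega_0)=\Gamma(\Omega_t)=\int_{\mathbb{S}^{n-1}}h\,|\nabla u|^{p-1}\det(h_{ij}+h\delta_{ij})\,dx\leq C^{p-1}\,n\,V(\Omega_t),
\end{align*}
while origin symmetry gives $V(\Omega_t)\leq w_-(\Omega_t)\,w_+(\Omega_t)^{n-1}\leq C\min_x h(x,t)$; chaining these yields $\min_x h(x,t)\geq 1/C$ with no limiting argument. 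So the paper's approach buys an explicit, uniform-in-$t$ constant and avoids any analysis of $p$-harmonic measures on lower-dimensional sets, at the cost of invoking the boundary gradient estimate of Lewis--Nystr\"om; your approach would be self-contained at the level of convex geometry if the degeneracy of $\Gamma$ on flat sets were proved, but as written that step is missing.
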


\begin{proof}
Due to $\rho(u,t)u=\nabla h(x,t)+h(x,t)x$. Clearly, one see
\begin{align*}
\min_{{\mathbb{S}^{n-1}}} h(x,t)\leq \rho(u,t)\leq \max_{{\mathbb{S}^{n-1}}}h(x,t).
\end{align*}
This implies that the estimate (\ref{eq:zcC0}) is tantamount to the estimate (\ref{eq:jxC0}). So, we only need to establish (\ref{eq:zcC0}) or (\ref{eq:jxC0}).

Using the monotonicity of $\Psi(\Omega_{t})$, we have
\begin{align}\label{khhd}
\Psi\left(\Omega_0\right)+\log \Gamma\left(\Omega_0\right) \geq \log \int_{\mathbb{S}^{n-1}} f(x) h(x, t) d x.
\end{align}
Let $\rho_{\max}(t)=\max_{{\mathbb{S}^{n-1}}}\rho(\cdot,t)$. By a rotation of coordinate, we suppose that $\rho_{\max}(t)=\rho(e_{1},t)$. Since $\Omega_{t}$ is origin symmetric, by the definition of $h(x,t)$, one see $h(x,t)\geq \rho_{\max}(t)\langle x, e_1\rangle$ for $\forall x\in {\mathbb{S}^{n-1}}$.

Next we will obtain the upper bound of $h(x,t)$. By 
 (\ref{khhd}), 
\begin{align*}
\Gamma\left(\Omega_0\right) e^{\Psi\left(\Omega_0\right)}\geq \int_{\mathbb{S}^{n-1}} h(x, t) f d x & \geq \int_{\mathbb{S}^{n-1}} \rho_{\max }\langle x, e_1\rangle f d x \\
& \geq|\min f| \rho_{\max } \int_{\mathbb{S}^{n-1}}\langle x, e_1\rangle d x \\
& \geq|\min f| \rho_{\max } c_0,
\end{align*}
hence,
\begin{align}\label{eqyzsj}
\rho_{\max }(t) \leq \frac{\Gamma\left(\Omega_0\right) e^{\Psi\left(\Omega_0\right)}}{c_0|\min f|} \leq C,
\end{align}
for some $C>0$, independent of $t$. This implies the upper bound for (\ref{eq:zcC0}).

On the other hand, to give the lower bound of $h(x, t)$ along the following line. Note that $\operatorname{V}\left(\Omega_t\right) \leq w_{-}\left(\Omega_t\right) w_{+}\left(\Omega_t\right)^{n-1}$, where $w_{-}\left(\Omega_t\right)$ and $w_{+}\left(\Omega_t\right)$ are respectively the minimum width and maximum width of $\Omega_t$. Since $\Omega_t$ is origin-symmetric, $w_{-}\left(\Omega_t\right)=2 \min _{x \in \mathbb{S}^{n-1}} h(x, t)$ and $w_{+}\left(\Omega_t\right)=2 \max _{x \in \mathbb{S}^{n-1}} h(x, t)$. So, utilizing (\ref{eqyzsj}), we have
\begin{align}\label{eq:VMIN}
\nonumber\operatorname{V}\left(\Omega_t\right) & \leq w_{-}\left(\Omega_t\right) w_{+}\left(\Omega_t\right)^{n-1} \\
& \leq C \min _{x \in \mathbb{S}^{n-1}} h(x, t),
\end{align}
for a positive constant $C$, independent of $t$. Lemma \ref{lemtdyj}, Lemma \ref{lemlxx}, and (\ref{eqyzsj}) show that $\operatorname{V}\left(\Omega_t\right) \geq \frac{1}{C^{p-1}} \Gamma\left(\Omega_t\right)>0$ for a positive constant $C$ as showed in (\ref{eqyzsj}), together this with (\ref{eq:VMIN}), we conclude that $h(x, t)$ has a uniformly lower bound. 
\end{proof}

The $C^{1}$ estimate naturally follows by applying above $C^{0}$ estimate.
\begin{lemma}\label{lemcqgj}
 Let $f$ be an even, smooth and positive function on ${\mathbb{S}^{n-1}}$,  $1<p<\infty$ and $\Omega_{t}$ be a smooth, origin-symmetric and strictly convex solution satisfying the  flow (\ref{eq:thgqll}). Then
\begin{align}\label{Eq:c1zc}
|\nabla h(x,t)|\leq C, \ \forall (x,t)\in {{\mathbb{S}^{n-1}}}\times (0,+\infty),
\end{align}
and
\begin{align}\label{Eq:cejx}
|\nabla\rho(u,t)|\leq C, \ \forall (u,t)\in {{\mathbb{S}^{n-1}}}\times (0,+\infty),
\end{align}
for some $C> 0$, independent of $t$.
\end{lemma}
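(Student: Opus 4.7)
The plan is to derive both gradient bounds directly from the $C^0$ estimates of Lemma \ref{lemco} using the standard identities relating the support function and the radial function of a convex body containing the origin. There is no PDE work to do here; everything is pointwise convex geometry.

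First I would recall the parametrization $X(x,t)=\nabla h(x,t)+h(x,t)\,x$ from (\ref{Eq:fgh1}), which expresses the boundary point with outer normal $x$ in terms of the support function. Since $\nabla h(x,t)\in T_x\mathbb{S}^{n-1}$ is orthogonal to $x$, taking norms gives
\begin{align*}
|\nabla h(x,t)|^{2}+h(x,t)^{2}=|X(x,t)|^{2}=\rho(u,t)^{2},
\end{align*}
where $u=X(x,t)/|X(x,t)|$. Rearranging yields $|\nabla h(x,t)|^{2}=\rho(u,t)^{2}-h(x,t)^{2}$, and inserting the uniform upper bound $\rho(u,t)\le C$ from (\ref{eq:jxC0}) immediately gives (\ref{Eq:c1zc}).

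For the radial function I would use the dual identity: the outer unit normal at $\rho(u,t)\,u\in\partial\Omega_t$ is
\begin{align*}
x(u,t)=\frac{\rho(u,t)\,u-\nabla\rho(u,t)}{\sqrt{\rho(u,t)^{2}+|\nabla\rho(u,t)|^{2}}},
\end{align*}
which yields $h(x(u,t),t)=\rho(u,t)^{2}/\sqrt{\rho(u,t)^{2}+|\nabla\rho(u,t)|^{2}}$. Solving for the gradient gives
\begin{align*}
|\nabla\rho(u,t)|^{2}=\rho(u,t)^{2}\Bigl(\frac{\rho(u,t)^{2}}{h(x(u,t),t)^{2}}-1\Bigr),
\end{align*}
so the two-sided bound $\tfrac{1}{C}\le h,\rho\le C$ from Lemma \ref{lemco} produces a uniform bound on $|\nabla\rho|$, which is (\ref{Eq:cejx}).

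Since both identities are pointwise in $t$ and involve only geometric quantities already controlled, no step presents a real obstacle; the only thing to be slightly careful with is keeping track of which variable ($x$ on the normal sphere or $u$ on the radial sphere) is being differentiated, so that the spherical gradient is the tangential one and the algebraic manipulation above is justified.
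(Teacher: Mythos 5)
Your proposal is correct and is essentially the paper's own argument: the authors likewise invoke the identities $\rho^{2}=h^{2}+|\nabla h|^{2}$ and $h=\rho^{2}/\sqrt{|\nabla\rho|^{2}+\rho^{2}}$ and conclude from the two-sided $C^{0}$ bounds of Lemma \ref{lemco}. You simply carry out the algebra they leave implicit.
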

\begin{proof}
Let $u$ and $x$ be related by $\rho(u,t)u=\nabla h(x,t)+h(x,t)x$, we have
\begin{align*}
 h=\frac{\rho^{2}}{\sqrt{|\nabla \rho|^{2}+\rho^{2}}},\quad \rho^{2}=h^{2}+|\nabla h|^{2}.
\end{align*}
The above facts together with Lemma \ref{lemco} illustrate the desired result.
\end{proof}

In order to get the $C^2$ on the solution of the  flow (\ref{eq:thgqll}), we will need the following conclusions about harmonic  functions.

\begin{lemma}\cite[Lemma 3.44]{AKM2023}\label{lemejtg}
 Let $u: \Omega \rightarrow \mathbb{R}$ be as in (\ref{Eq:pthhs}) and $\left\{e^{1}, \ldots, e^{n-1},\right\}$ be an orthonormal frame field of $\mathbb{S}^{n-1}$ such that for any $\xi \in \mathbb{S}^{n-1}$ the unit vectors $e^{i}=e^{i}(\xi) \in \mathbb{S}^{n-1}$ span the tangent space $T_{\xi}\left(\mathbb{S}^{n-1}\right)$. Then
\begin{itemize}
\setlength{\itemsep}{2pt}
\item[(a)] 
 $\left\langle D^{2} u(F(\xi)) e^{i}, e^{j}\right\rangle=-\mathcal{K}(F(\xi))|\nabla u(F(\xi))| \mathcal{C}_{i, j}\left[\nabla^{2} h+h \delta_{ij}\right]$;

\item[(b)]  $\left\langle D^{2} u(F(\xi)) \xi, e^{i}\right\rangle=-\mathcal{K}(F(\xi)) \sum_{j} \mathcal{C}_{i, j}\left[\nabla^{2} h+h\delta_{ij}\right] \nabla_{j}(|\nabla u(F(\xi))|)$;

\item[(c)]  $\left\langle D^{2} u(F(\xi)) \xi, \xi\right\rangle=\frac{1}{(p-1)} \mathcal{K}(F(\xi))|\nabla u(F(\xi))| \operatorname{Tr}\left(\mathcal{C}\left[\nabla^{2} h+h \delta_{ij}\right]\right)$,
\end{itemize}
where $\mathcal{C}_{i, j}[\cdot]=\left\langle\mathcal{C}[\cdot] e^{j}, e^{i}\right\rangle$ are entries of the cofactor matrix for $i, j \in\{1, \ldots, n-1\}$ with respect to this frame and $F(\xi)=\mathbf{g}_{\Omega}{ }^{-1}(\xi)=\nabla h(\xi)$.
\end{lemma}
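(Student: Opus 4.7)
The plan is to deduce all three identities from a single source: differentiate the identity $\nabla u(F(\xi))=-|\nabla u(F(\xi))|\,\xi$ (coming from (\ref{Eq:tdxld})) along the sphere, and invert the resulting linear system via the cofactor matrix. For parts (a) and (b) no PDE is used; only the boundary condition $u=0$ on $\partial\Omega$ (encoded in $\nabla u$ being parallel to the outer normal $\xi$) and the structure equations on $\mathbb{S}^{n-1}$. Part (c) then follows by combining (a) with the $p$-Laplace equation.

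The key computation is as follows. Apply $\nabla_i$ (the sphere covariant derivative) to $\nabla u(F(\xi))=-|\nabla u(F(\xi))|\xi$ and use the chain rule together with $\nabla_i\xi=e^i$:
\begin{equation*}
D^{2}u(F(\xi))\, F_{i}(\xi)=-\bigl(\nabla_{i}|\nabla u(F(\xi))|\bigr)\xi-|\nabla u(F(\xi))|\,e^{i}.
\end{equation*}
By (\ref{Eq:fgh1}) one has $F_{i}(\xi)=\sum_{j}b_{ij}\,e^{j}$ with $b_{ij}:=h_{ij}+h\delta_{ij}$. Pairing the displayed identity with $e^{k}$ (which is orthogonal to $\xi$) gives the matrix equation
\begin{equation*}
\sum_{j}b_{ij}\langle D^{2}u(F(\xi))e^{j},e^{k}\rangle=-|\nabla u(F(\xi))|\,\delta_{ik},
\end{equation*}
while pairing with $\xi$ yields
\begin{equation*}
\sum_{j}b_{ij}\langle D^{2}u(F(\xi))e^{j},\xi\rangle=-\nabla_{i}|\nabla u(F(\xi))|.
\end{equation*}
Now $b=[b_{ij}]$ is symmetric with $\det b=\mathcal{K}(F(\xi))^{-1}$ by (\ref{Eq:206}), so $b^{-1}=\mathcal{K}(F(\xi))\,\mathcal{C}[b]$. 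Inverting the two linear systems produces exactly (a) and (b).

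For (c), I expand the $p$-Laplace equation $\operatorname{div}(|\nabla u|^{p-2}\nabla u)=0$ as
\begin{equation*}
\Delta u+(p-2)\,|\nabla u|^{-2}\langle D^{2}u\,\nabla u,\nabla u\rangle=0,
\end{equation*}
and evaluate at $F(\xi)$. Since $\nabla u(F(\xi))=-|\nabla u(F(\xi))|\xi$, the quadratic term reduces to $|\nabla u|^{2}\langle D^{2}u\,\xi,\xi\rangle$. On the other hand, $\{e^{1},\dots,e^{n-1},\xi\}$ is an orthonormal basis of $\mathbb{R}^{n}$, so
\begin{equation*}
\Delta u=\sum_{i=1}^{n-1}\langle D^{2}u\,e^{i},e^{i}\rangle+\langle D^{2}u\,\xi,\xi\rangle,
\end{equation*}
and substituting (a) gives $\Delta u=-\mathcal{K}(F(\xi))|\nabla u(F(\xi))|\operatorname{Tr}\mathcal{C}[b]+\langle D^{2}u\,\xi,\xi\rangle$. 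Feeding both formulas into the expanded PDE and solving the resulting scalar equation for $\langle D^{2}u\,\xi,\xi\rangle$ yields (c), with the factor $1/(p-1)$ appearing as $(p-2)+1=p-1$.

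The only delicate points, hence the spots I would check carefully, are (i) the sign conventions that come from $\xi$ being the \emph{outer} normal (so $\nabla u$ points inward on $\partial\Omega$ where $u>0$ inside), (ii) the identification of the sphere tangent vectors $e^{i}(\xi)$ with an orthonormal tangent basis at $F(\xi)\in\partial\Omega$ (legitimate because $T_{\xi}\mathbb{S}^{n-1}=\xi^{\perp}=T_{F(\xi)}\partial\Omega$), and (iii) the use of $b^{-1}=\mathcal{K}\,\mathcal{C}[b]$ which requires symmetry of $b$ so that the cofactor matrix coincides with its transpose. Once these are in place the algebra is mechanical, and the only analytic input beyond linear algebra is the $p$-harmonic PDE used in (c).
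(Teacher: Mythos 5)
Your derivation is correct, and it is essentially the canonical argument: differentiate $\nabla u(F(\xi))=-|\nabla u(F(\xi))|\xi$ (i.e.\ (\ref{Eq:tdxld})) along the sphere, use $F_i(\xi)=\sum_j(h_{ij}+h\delta_{ij})e^j$ and $\nabla_i\xi=e^i$, invert the resulting systems through $b^{-1}=\mathcal{K}\,\mathcal{C}[b]$ via (\ref{Eq:206}) to get (a) and (b), and then obtain (c) by expanding the $p$-Laplace equation in the orthonormal basis $\{e^1,\dots,e^{n-1},\xi\}$, which is exactly how the factor $1/(p-1)$ arises. Note that this paper does not prove the lemma at all --- it is imported verbatim from \cite[Lemma 3.44]{AKM2023} --- so there is no in-paper proof to compare against; your argument reconstructs the source computation. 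Two minor points worth making explicit if you write this up: evaluating the pointwise expansion $\Delta u+(p-2)|\nabla u|^{-2}\langle D^2u\,\nabla u,\nabla u\rangle=0$ at the boundary point $F(\xi)$ uses that $u$ is $C^2$ up to $\partial\Omega$ with $\nabla u\neq 0$ there (this is part of the regularity package in the setup (\ref{Eq:pthhs}) and the choice of $N$, not a free consequence of $p$-harmonicity in the interior); and you correctly use $F_i(\xi)=\sum_j b_{ij}e^j$, whereas the paper's display (\ref{Eq:fgh1}) misprints the matrix entries as a determinant --- your version is the right one.
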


\begin{lemma}\cite[Proposition 3.20]{AKM2023}\label{lemtdgji} 
If $u(\cdot, t) \in W^{1, p}\left(\Omega^{t} \cap N\right)$ is the solution of (\ref{Eq:pthhs}), then, the following holds:
\begin{itemize}
\setlength{\itemsep}{2pt}
\item[(a)]  $t \mapsto u(\cdot, t)$ is differentiable at $t=0$ for all $x \in \bar{\Omega} \cap N$ and $\dot{u} \in C^{2, \beta}(\overline{\Omega \cap N})$;

\item[(b)] $\dot{u}(x)=-\langle\nabla u(x), x\rangle$ for all $x \in \partial N \cap \Omega$;

\item[(c)] $\dot{u}(x)=|\nabla u(x)| v\left(\mathbf{g}_{\Omega}(x)\right)$ for all $x \in \partial \Omega$.
\end{itemize}
\end{lemma}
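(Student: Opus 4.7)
My plan is to recast the moving-domain boundary value problem as a one-parameter family of equations on a fixed reference domain via a smooth family of diffeomorphisms, prove $C^1$-dependence on $t$ by the implicit function theorem applied to the linearized $p$-Laplacian, and then read off the boundary formulas in (b) and (c) by differentiating the respective Dirichlet conditions at $t=0$.

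First I would construct a smooth family $\Phi_t:\overline{\Omega\cap N}\to\overline{\Omega^t\cap N}$ with $\Phi_0=\mathrm{id}$ and $\Phi_t(\partial\Omega)=\partial\Omega^t$, where $\Phi_t$ restricted to $\partial N$ is the dilation $x\mapsto e^t x$ that matches the drift term $+X$ appearing in the flow (\ref{eq:thgqll}). Setting $\tilde u(y,t):=u(\Phi_t(y),t)$, the function $\tilde u(\cdot,t)$ solves a $t$-dependent quasilinear equation $\mathcal{L}_t\tilde u=0$ on the \emph{fixed} domain $\Omega\cap N$, with $\tilde u=0$ on $\partial\Omega$ and $\tilde u(y,t)=u_0(y)$ on $\partial N\cap\Omega$ for all small $t$; here $\mathcal{L}_0=\operatorname{div}(|\nabla\cdot|^{p-2}\nabla\cdot)$ and $\mathcal{L}_t$ depends smoothly on $t$ through the pulled-back metric $\Phi_t^{\ast}\delta$.

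Next I would establish (a). Combining Lemma \ref{lemtdyj} with the uniform Lipschitz bound on $u$ gives $|\nabla u|\ge c_0>0$ throughout $\overline{\Omega\cap N}$, so the linearization $D\mathcal{L}_0(u)$ is a uniformly elliptic second-order linear operator with $C^{0,\beta}$ coefficients. Schauder theory plus the maximum principle yields that $D\mathcal{L}_0(u)$ is an isomorphism from $\{w\in C^{2,\beta}(\overline{\Omega\cap N}):w|_{\partial(\Omega\cap N)}=0\}$ onto $C^{0,\beta}(\overline{\Omega\cap N})$. The implicit function theorem between these Banach spaces then gives that $t\mapsto\tilde u(\cdot,t)$ is $C^1$ near $t=0$ with values in $C^{2,\beta}$. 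Applying the chain rule to $u(x,t)=\tilde u(\Phi_t^{-1}(x),t)$ yields pointwise differentiability of $t\mapsto u(x,t)$ at every $x\in\overline{\Omega\cap N}$, and $\dot u$ inherits $C^{2,\beta}$ regularity from $\partial_t\tilde u|_{t=0}$, which itself solves a linear Schauder problem with $C^{0,\beta}$ data.

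Formulas (b) and (c) follow by differentiating the respective boundary conditions. On $\partial N\cap\Omega$ the pulled-back datum $\tilde u(y,t)=u_0(y)$ is $t$-independent, so $u(x,t)=u_0(\Phi_t^{-1}(x))=u_0(e^{-t}x)+O(t^2)$, and $\dot u(x)=-\langle\nabla u(x),x\rangle$. On $\partial\Omega$ pick any smooth parametrization $Y_t:\partial\Omega\to\partial\Omega^t$ with $Y_0=\mathrm{id}$ whose initial normal velocity at $x$ is $v(\mathbf{g}_\Omega(x))\,\nu(x)$ (this is the geometric meaning of the speed $v$); differentiating the identity $u(Y_t(x),t)\equiv 0$ at $t=0$ gives $\dot u(x)=-v(\mathbf{g}_\Omega(x))\langle\nabla u(x),\nu(x)\rangle$, and since $u>0$ in $\Omega$ vanishes on $\partial\Omega$ one has $\nabla u(x)=-|\nabla u(x)|\nu(x)$, hence $\dot u(x)=|\nabla u(x)|\,v(\mathbf{g}_\Omega(x))$.

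The main obstacle is the differentiability claim (a): it requires uniform ellipticity of the linearized $p$-Laplacian, which would fail at any point where $\nabla u$ vanishes. The crucial input is therefore the non-degeneracy $|\nabla u|\ge c_0>0$ on $\overline{\Omega\cap N}$ provided by Lemma \ref{lemtdyj}; once this is secured, the remaining ingredients (construction of $\Phi_t$, Schauder invertibility of the linearization, chain rule in Banach spaces) are standard, and (b)--(c) reduce to boundary traces of the abstract differentiability established in (a).
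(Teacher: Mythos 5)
This lemma is not proved in the paper at all: it is imported verbatim as \cite[Proposition 3.20]{AKM2023}, so there is no internal argument to compare yours against. Your reconstruction is the standard route for such Hadamard-type differentiability statements (pull back to a fixed annular domain by a smooth family of diffeomorphisms, apply the implicit function theorem to the linearized $p$-Laplacian, which is uniformly elliptic precisely because $|\nabla u|$ is bounded below, then read off (b) and (c) by differentiating the boundary identities, using that $\nabla u=-|\nabla u|\nu$ on $\partial\Omega$), and it is in the same spirit as the Jerison/Akman--Mukherjee arguments; the computations for (b) and (c) are correct as written. Two caveats are worth flagging. First, the statement only determines $u(\cdot,t)$ once one specifies the data on the inner boundary $\partial N$ for $t\neq 0$, which neither the lemma nor (\ref{Eq:pthhs}) records; you \emph{choose} the dilation transport $u(e^{t}y,t)=u_0(y)$ on $\partial N$ so that (b) comes out, which is a consistent reverse-engineering of the setup in \cite{AKM2023} but should be stated as part of the hypotheses rather than derived. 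Second, Lemma \ref{lemtdyj} gives the nondegeneracy $|\nabla u|\geq c_0>0$ only in boundary balls $\Omega\cap B(w,r/c_3)$ near $\partial\Omega$; on the rest of $\overline{\Omega\cap N}$ you must invoke the standing normalization that $N$ is shrunk so that $\nabla u\neq 0$ in $\Omega\cap N$ (together with continuity of $\nabla u$ up to $\partial N$ and compactness) to get the uniform lower bound that makes the linearization uniformly elliptic and the Nemytskii map $w\mapsto|\nabla w|^{p-2}\nabla w$ smooth on a $C^{2,\beta}$ neighborhood of $u$. With those two points made explicit, your sketch is a sound substitute for the cited proposition.
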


In what follows, we shall give the upper and lower bounds of principal curvature of $\partial \Omega_t$ based on above preparations.
\begin{lemma}\label{thmcegj}
 Let $f$ be an even, smooth and positive function on $\mathbb{S}^{n-1}$,  $1<p<\infty$ and $\Omega_t$ be a smooth, origin-symmetric and strictly convex solution satisfying the  flow (\ref{eq:thgqll}). Then,
$$
\frac{1}{C} \leq \mathcal{K}_i \leq C, \forall(x, t) \in \mathbb{S}^{n-1} \times(0,+\infty)
$$
for some positive constants $C$, independent of $t$.
\end{lemma}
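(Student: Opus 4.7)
The plan is to obtain uniform two-sided bounds on the eigenvalues $\{\lambda_i\}$ of the matrix $b_{ij} = h_{ij} + h\delta_{ij}$, so that the principal curvatures $\mathcal{K}_i = 1/\lambda_i$ are pinched between positive constants independent of $t$, via the parabolic maximum principle applied to well-chosen auxiliary functions. Throughout I will use that Lemmas \ref{lemco} and \ref{lemcqgj} supply uniform $C^0$ and $C^1$ bounds on $h$; Lemmas \ref{lemlxx} and \ref{lemddx} together keep the scalar $\eta(t)$ in a uniform interval $[1/C, C]$; and Lemma \ref{lemtdyj} combined with the $C^0$--$C^1$ estimates yields $|\nabla u| \sim u/\mathrm{dist}(\cdot,\partial\Omega_t)$ with two-sided bounds on $\partial\Omega_t$. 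For brevity write the flow (\ref{eq:thgqwf}) as $\partial_t h = -\Phi + h$ with $\Phi := \eta(t) f h \mathcal{K}/|\nabla u|^{p-1}$.

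First I would derive the evolution equation for $b_{ij}$. Differentiating (\ref{eq:thgqwf}) twice on $\mathbb{S}^{n-1}$ and commuting covariant derivatives produces a quasilinear parabolic equation of the schematic form
\[
\partial_t b_{ij} = \Phi\, b^{kl}\nabla_{kl} b_{ij} + (\text{lower order terms}),
\]
where the lower-order terms involve $h$, $\nabla h$, $b_{ij}$, and the first and second spatial derivatives of $|\nabla u|$. The latter are precisely what Lemma \ref{lemejtg} is designed for: $\nabla_j|\nabla u|$ and $\nabla_{ij}|\nabla u|$ are rewritten purely in terms of $\mathcal{K}$, $|\nabla u|$ and the cofactor matrix of $b_{ij}$, quantities that are already controlled or are to be controlled. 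Lemma \ref{lemtdgji} provides the matching boundary identities that legitimise these substitutions for the evolving domain.

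The second step is an upper bound on $\lambda_{\max}(b_{ij})$, i.e.\ a positive lower bound on the principal curvatures. Working in a local orthonormal frame in which $b_{ij}$ is diagonal at the extremal point, I would apply the maximum principle to
\[
\widetilde{W}(x,t) = \log \lambda_{\max}(b_{ij}) - \gamma \log h + \beta |\nabla h|^2,
\]
with constants $\gamma,\beta>0$ to be chosen. At a spatial maximum the derivatives of $|\nabla u|$ appearing in the linearised equation are eliminated through Lemma \ref{lemejtg}(b)--(c) and the uniform bounds on $|\nabla u|$ from Lemma \ref{lemtdyj}; a standard absorption argument in the spirit of \cite{HJR2024,LYU2019} then forces $\lambda_{\max} \leq C$.

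The third step is the companion lower bound on $\det b_{ij}$, equivalently an upper bound on $\mathcal{K}$. I would run the maximum principle on the auxiliary function $\log \mathcal{K} - A\, h$ at its spatial maximum, again using Lemma \ref{lemejtg} to rewrite derivatives of $|\nabla u|$, and exploiting that the upper bound on $\lambda_{\max}$ obtained in the previous step makes the linearised flow uniformly parabolic. Choosing $A$ sufficiently large absorbs the bad terms and yields $\mathcal{K} \le C$; combined with the upper bound on $\lambda_{\max}$ this pins every eigenvalue $\lambda_i$ between two positive constants, giving the claimed bounds on $\mathcal{K}_i = 1/\lambda_i$. The main obstacle in both maximum-principle arguments is the non-local, implicit dependence of $|\nabla u|$ on the entire shape of $\Omega_t$ through the Dirichlet problem (\ref{Eq:pthhs}): every time a derivative of $|\nabla u|$ appears it must be replaced, via Lemma \ref{lemejtg}, by a tensorial expression in $\mathcal{K}$ and the cofactor matrix of $b_{ij}$ in order to close the differential inequality. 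Once this substitution is performed cleanly and uniformly in $t$, the maximum-principle estimates go through in the standard fashion.
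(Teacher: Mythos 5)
Your proposal follows essentially the same route as the paper: two parabolic maximum-principle arguments resting on the $C^0$/$C^1$ estimates and the constancy of $\Gamma(\Omega_t)$, with Lemma \ref{lemejtg} used to convert derivatives of $|\nabla u|$ into controlled tensorial quantities, and the paper's auxiliary function $\log\lambda_{\max}(b_{ij})-A\log h+e^{B|\nabla h|^2}$ for the lower curvature bound is virtually identical to your $\widetilde W$. The only real difference is cosmetic: for the upper bound on $\mathcal{K}$ the paper applies the maximum principle to the normalized speed $Q=-h_t/(h-\varepsilon_0)$ rather than to $\log\mathcal{K}-Ah$, arriving at the ODE inequality $\partial_t Q\le C_0Q^2\bigl(C_1-\varepsilon_0 Q^{1/(n-1)}\bigr)$, which is a variation within the same method.
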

\begin{proof}
For  $1<p<\infty$, in order to prove that the Gaussian curvature $\mathcal{K}$ has an upper bound, we construct the following auxiliary function: 
\begin{align}\label{eq:fzhsq}
Q(x, t)=\frac{\eta(t)\frac{f(x) h(x, t)  \mathcal{K}(x, t)}{\mid \nabla u(F(x, t), t)\mid^{p-1}}-h(x, t)}{h-\varepsilon_0}=\frac{-h_t}{h-\varepsilon_0},
\end{align}
where $h_t=\frac{\partial h(x, t)}{\partial t}$ and
\begin{align*}
\varepsilon_0=\frac{1}{2} \min _{\mathbb{S}^{n-1} \times(0,+\infty)} h(x, t)>0 .
\end{align*}

For any fixed $t \in(0, \infty)$, assume that $Q(x, t)$ reaches its maximum value at $x_0$.  Thus, we obtain that at $x_0$,
\begin{align}\label{eq:qxyjd}
0=\nabla_i Q=\frac{-h_{t i}}{h-\varepsilon_0}+\frac{h_t h_i}{\left(h-\varepsilon_0\right)^2} .
\end{align}
Then, with the assistance of (\ref{eq:qxyjd}) and (\ref{eq:thgqwf}), at $x_0$, we deduce
\begin{align*}
0 \geq \nabla_{i i} Q & =\frac{-h_{t i i}}{h-\varepsilon_0}+\frac{2 h_{t i} h_i+h_t h_{i i}}{\left(h-\varepsilon_0\right)^2}-\frac{2 h_t h_i^2}{\left(h-\varepsilon_0\right)^3} \\
& =\frac{-h_{t i i}}{h-\varepsilon_0}+\frac{h_t h_{i i}}{\left(h-\varepsilon_0\right)^2}.
\end{align*}
From the above formula combined with (\ref{eq:fzhsq}), we get
\begin{align}\label{Eq:tzzgj}
\nonumber-h_{t i i}-h_t\delta_{ij} & \leq-\frac{h_t h_{i i}}{h-\varepsilon_0}-h_t\delta_{ij} \\
& =\frac{-h_t}{h-\varepsilon_0}\left[h_{i i}+\left(h-\varepsilon_0\right)\delta_{ij}\right] \\
\nonumber& =Q\left(b_{i i}-\varepsilon_0\delta_{ij}\right) .
\end{align}
Now take the derivative of the second variable of $Q(x,t)$, by (\ref{eq:fzhsq}) and (\ref{eq:thgqwf}), we get
\begin{align}\label{Eq:qyjgj}
\nonumber\frac{\partial Q}{\partial t}=& \frac{-h_{t t}}{h-\varepsilon_0} +\frac{h_t^2}{\left(h-\varepsilon_0\right)^2} \\
= & \frac{f(x)}{h-\varepsilon_0}\left[\frac{\partial\eta(t)}{\partial t}\frac{h(x, t)  \mathcal{K}(x, t)}{\mid \nabla u(F(x, t), t)\mid^{p-1}}+\frac{\partial h(x, t)}{\partial t}\frac{\eta(t)\mathcal{K}(x, t)}{\mid \nabla u(F(x, t), t)\mid^{p-1}}\right.\\
\nonumber&\left. +\frac{\partial \mathcal{K}(x, t)}{\partial t}\frac{\eta(t)h(x, t)}{\mid \nabla u(F(x, t), t)\mid^{p-1}}+\eta(t)h(x, t)\mathcal{K}(x, t)\frac{\partial \mid \nabla u(F(x, t), t)\mid^{1-p}}{\partial t}\right]\\
\nonumber& +Q+Q^2.
\end{align}
Next, our division makes an estimate of (\ref{Eq:qyjgj}).  From (\ref{eq:yita}), Lemma \ref{lemlxx} and (\ref{eq:fzhsq}), we obtain
\begin{align}\label{Eq:fzhsgj}
\nonumber\frac{\partial\eta(t)}{\partial t}=&\frac{\partial}{\partial t}\bigg(\frac{ \Gamma\left(\Omega_t\right)}{\int_{\mathbb{S}^{n-1}} f(x) h(x, t) d x}\bigg)\\
\nonumber=&\frac{\partial  \Gamma\left(\Omega_t\right)}{\partial  t}\bigg(\frac{1}{\int_{\mathbb{S}^{n-1}} f(x) h(x, t) d x}\bigg)+\Gamma\left(\Omega_t\right)\frac{\partial \bigg(\frac{1}{\int_{\mathbb{S}^{n-1}} f(x) h(x, t) d x}\bigg)}{\partial  t}\\
\nonumber=&-\Gamma\left(\Omega_t\right)\frac{\int_{\mathbb{S}^{n-1}} f(x) \frac{\partial h}{\partial t} d x}{\bigg(\int_{\mathbb{S}^{n-1}} f(x) h(x, t) d x\bigg)^2}\\
=&\Gamma\left(\Omega_t\right)\frac{\int_{\mathbb{S}^{n-1}} f(x) (h(x, t)-\varepsilon_0)Q(x,t) d x}{\bigg(\int_{\mathbb{S}^{n-1}} f(x) h(x, t) d x\bigg)^2}\\
\nonumber\leq&Q(x_0,t)\Gamma\left(\Omega_t\right)\frac{1}{\bigg(\int_{\mathbb{S}^{n-1}} f(x) h(x, t) d x\bigg)}\\
\nonumber\leq&C_1Q(x_0,t).
\end{align}
 Rotate the axes so that $\left\{b_{i j}\right\}$ is diagonal at $x_0$ with $b_{i j}=h_{i j}+h \delta_{i j}$, $\left\{b^{i j}\right\}$  is the inverse of $\left\{b_{i j}\right\}$ . Using (\ref{Eq:206})  and (\ref{Eq:tzzgj}) at $x_0$, we can get
\begin{align}\label{Eq:gsqlgj}
\nonumber\frac{\partial \mathcal{K}(x, t)}{\partial t}&=\frac{\partial\left[\operatorname{det}\left(\nabla^2 h+h I\right)\right]^{-1}}{\partial t}\\ 
\nonumber& =-\left[\operatorname{det}\left(\nabla^2 h+h I\right)\right]^{-2} \sum_i \frac{\partial\left[\operatorname{det}\left(\nabla^2 h+h I\right)\right]}{\partial b_{i i}}\left(h_{t i i}+h_t\right) \\
\nonumber& \leq\left[\operatorname{det}\left(\nabla^2 h+h I\right)\right]^{-2} \sum_i \frac{\partial\left[\operatorname{det}\left(\nabla^2 h+h I\right)\right]}{\partial b_{i i}} Q\left(b_{i i}-\varepsilon_0\right) \\
& \leq Q\left[\operatorname{det}\left(\nabla^2 h+h I\right)\right]^{-2}\left[\operatorname{det}\left(\nabla^2 h+h I\right)\right] \sum_i  b^{i i}\left(b_{i i}-\varepsilon_0\right) \\
\nonumber& =\mathcal{K} Q\left[(n-1)-\varepsilon_0 \sum_i b^{i i}\right]\\ \nonumber&=\mathcal{K} Q\left[(n-1)-\varepsilon_0 H\right] \\
\nonumber& \leq \mathcal{K} Q\left[(n-1)-\varepsilon_0(n-1) \mathcal{K}^{\frac{1}{n-1}}\right],
\end{align}
where $H$ denotes the mean curvature of $\partial \Omega_t$, and the last inequality stems from $H \geq(n-$ 1) $\left(\Pi_i b^{i i}\right)^{\frac{1}{n-1}}=(n-1) 
\mathcal{K}^{\frac{1}{n-1}}$.

In addition, from (\ref{Eq:fgh}), (\ref{Eq:fgh1}), (\ref{Eq:tdxld}) and (c) of Lemma \ref{lemtdgji}, we obtain
\begin{align}\label{Eq:tdwfgj}
\nonumber& \frac{\partial\mid \nabla u(F(x, t), t)\mid^{1-p}}{\partial t} \\
\nonumber= & (1-p)|\nabla u(F(x, t), t)|^{-p} \frac{\partial|\nabla u(F(x, t), t)|}{\partial t} \\
\nonumber= &-(1-p)|\nabla u(F(x, t), t)|^{-p} \left(\bigg\langle \nabla^2 u(F(x, t), t) x , \sum_i\left(h_{t i} e^i+h_t x\right)\bigg\rangle +\bigg\langle \nabla \dot{u}(F(x, t), t) , x\bigg\rangle \right) \\
\nonumber= & (1-p)|\nabla u(F(x, t), t)|^{-p}\bigg\langle\nabla^2 u(F(x, t), t) x ,\left(\sum_i\left(\left(h-\varepsilon_0\right) Q\right)_i e^i+\left(h-\varepsilon_0\right) Q x\right)\bigg\rangle \\
\nonumber&+(p-1)|\nabla u(F(x, t), t)|^{-p} \langle\nabla \dot{u}(F(x, t), t), x\rangle \\
\nonumber= & (1-p)|\nabla u(F(x, t), t)|^{-p}\left(\bigg\langle\nabla^2 u(F(x, t), t) x , \sum_i h_i Q e^i\bigg\rangle+\langle\nabla^2 u(F(x, t), t) x ,\left(h-\varepsilon_0\right) Q x\rangle\right) \\
&+(p-1)|\nabla u(F(x, t), t)|^{-p} \langle\nabla \dot{u}(F(x, t), t) , x\rangle .
\end{align}
Now, our division estimates (\ref{Eq:tdwfgj}). Since  $1<p<\infty$, from Lemma \ref{lemco} and Lemma \ref{lemcqgj}, there exist positive constants $c_1$ and $c_2$, independent of $t$, such that 
\begin{align*}
&(1-p)|\nabla u(F(x, t), t)|^{-p}\left(\bigg\langle\nabla^2 u(F(x, t), t) x , \sum_i h_i Q e^i\bigg\rangle+\langle\nabla^2 u(F(x, t), t) x ,\left(h-\varepsilon_0\right) Q x\rangle\right) \\
=&(1-p)|\nabla u(F(x, t), t)|^{-p}\left\langle\nabla^2 u(F(x, t), t) x , \sum_i h_i Q e^i\right\rangle\\&+(1-p)|\nabla u(F(x, t), t)|^{-p}\left\langle\nabla^2 u(F(x, t), t) x ,\left(h-\varepsilon_0\right) Q x\right\rangle \\
 \leq &c_1 Q|\nabla u(F(x, t), t)|^{-p}\left|\nabla^2 u(F(x, t), t)\right| + c_2 Q|\nabla u(F(x, t), t)|^{-p}\left|\nabla^2 u(F(x, t), t)\right|,
\end{align*}
meanwhile, by (c) of Lemma \ref{lemtdgji}, (\ref{Eq:fgh1}),  (\ref{Eq:tdxld}) and (\ref{eq:fzhsq}), there exist positive constants $c_3$ and $c_4$, independent of $t$, such that
\begin{align*}
& (p-1)|\nabla u(F(x, t), t)|^{-p} \langle\nabla \dot{u}(F(x, t), t) , x\rangle \\
= & (p-1)|\nabla u(F(x, t), t)|^{-p}\left\langle\nabla\left\langle-\nabla u(F(x, t), t),\left(\sum_i h_{t i} e^i+h_t x\right)\right\rangle, x\right\rangle \\
= & (p-1)|\nabla u(F(x, t), t)|^{-p}\left\langle\nabla\left\langle|\nabla u(F(x, t), t)| x ,\left(\sum_i h_{t i} e^i+h_t x\right)\right\rangle , x\right\rangle \\
= & (p-1)|\nabla u(F(x, t), t)|^{-p}\left\langle\nabla\left\langle|\nabla u(F(x, t), t)| x ,\left(\sum_i-h_i Q e^i-\left(h-\varepsilon_0\right) Q x\right)\right\rangle , x\right\rangle\\
= & (p-1)|\nabla u(F(x, t), t)|^{-p}\left\langle\nabla\left[|\nabla u(F(x, t), t)|\left(h-\varepsilon_0\right) Q\right] , x\right\rangle \\
= & (p-1)\left(h-\varepsilon_0\right) Q|\nabla u(F(x, t), t)|^{-p}\langle\nabla|\nabla u(F(x, t), t)|, x\rangle+(p-1) Q|\nabla u(F(x, t), t)|^{1-p} \langle\nabla h , x \rangle\\
& +(p-1)\left(h-\varepsilon_0\right)|\nabla u(F(x, t), t)|^{1-p} \langle\nabla Q , x \rangle\\
= &(p-1)\left(h-\varepsilon_0\right) Q|\nabla u(F(x, t), t)|^{-p-1}\left\langle\nabla u(F(x, t), t) \nabla^2 U(F(x, t), t), x \right\rangle\\
& +(p-1) Q|\nabla u(F(x, t), t)|^{1-p} \langle\nabla h , x \rangle\\
&+(p-1)\left(h-\varepsilon_0\right)|\nabla u(F(x, t), t)|^{1-p}\bigg\langle\left(\frac{-\sum_i\left(h_i\right)_t e^i}{h-\varepsilon_0}+Q x-Q \frac{\nabla h}{h-\varepsilon_0}\right), x \bigg\rangle\\
\leq & c_3 Q|\nabla u(F(x, t), t)|^{-p}\left|\nabla^2 u(F(x, t), t)\right|+c_4 Q|\nabla u(F(x, t), t)|^{1-p},
\end{align*}
hence, there exist positive constants $C_2$ and $C_3$, independent of $t$, such that
\begin{align}\label{Eq:udgj}
& \frac{\partial\mid \nabla u(F(x, t), t)\mid^{1-p}}{\partial t}
 \leq & C_2 Q|\nabla u(F(x, t), t)|^{-p}\left|\nabla^2 u(F(x, t), t)\right|+C_3Q|\nabla u(F(x, t), t)|^{1-p} .
\end{align}
Thus, by (\ref{Eq:qyjgj}), (\ref{Eq:fzhsgj}), (\ref{Eq:gsqlgj}) and (\ref{Eq:udgj}), we obtain
\begin{align*}
\frac{\partial Q}{\partial t}\leq& \frac{f(x)}{h-\varepsilon_0}\left[C_1Q(x_0,t)\frac{h(x, t)  \mathcal{K}(x, t)}{\mid \nabla u(F(x, t), t)\mid^{p-1}}-Qh\frac{\eta(t)\mathcal{K}(x, t)}{\mid \nabla u(F(x, t), t)\mid^{p-1}}\right.\\
&+\eta(t)h(x, t)\mathcal{K}(x, t)\left(C_2 Q|\nabla u(F(x, t), t)|^{-p}\left|\nabla^2 u(F(x, t), t)\right|+C_3Q|\nabla u(F(x, t), t)|^{1-p}\right) \\
&\left.+\mathcal{K} Q\left((n-1)-\varepsilon_0(n-1) \mathcal{K}^{\frac{1}{n-1}}\right)\frac{\eta(t)h(x, t)}{\mid \nabla u(F(x, t), t)\mid^{p-1}}\right]\\
& +Q+Q^2 .
\end{align*}

For a $Q$ large enough, by  Lemma \ref{lemco} and Lemma \ref{lemcqgj}, it is easy to see that there exist a positive constant $C$, independent of $t$, such that  
$$\frac{1}{C} \mathcal{K} \leq Q \leq C \mathcal{K},$$ 
then, for $\mathcal{K} \approx Q>>1$ ( $Q$ is sufficiently large), we obtain 
\begin{align}\label{Eq:cwffx}
\frac{\partial Q}{\partial t} \leq C_0 Q^2\left(C_1-\varepsilon_0 Q^{\frac{1}{n-1}}\right)<0,
\end{align}
for some positive $C_0, C_1$, independent of $t$. Therefore, the ODE (\ref{Eq:cwffx}) implies that
$$
Q\left(x_0, t\right) \leq C,
$$
for some $C>0$, independent of $t$ and $x$, depending only on $Q(x_0,t)$, $C_0$, $C_1$, $\varepsilon_0$. Hence, we conclude
$$
\mathcal{K}\leq \frac{Q(x, t)(h-\varepsilon_0)+h}{\frac{\eta(t)f(x) h(x, t)}{\mid\nabla u(F(x, t), t)\mid^{p-1}}}\leq\frac{Q(x_0, t)(h-\varepsilon_0)+h}{\frac{\eta(t)f(x) h(x, t)}{\mid\nabla u(F(x, t), t)\mid^{p-1}}}\leq C.
$$

Next, we prove $\mathcal{K}_i\geq\frac{1}{C}$. Consider the auxiliary function
\begin{align}\label{Eq:fzhsh}
H(x, t)=\log \lambda_{\max }\left(\left\{b_{i j}\right\}\right)-A \log h(x, t)+\exp^{B\left|\nabla h\right|^2},
\end{align}
where $A$ and $B$ are positive constants, $\lambda_{\max }\left(\left\{b_{i j}\right\}\right)$ is the maximal eigenvalue of $\left\{b_{i j}\right\}$.

For any fixed $t \in(0,+\infty)$, we assume that the maximum value of $H(x, t)$ is achieved at $x_0$ on $\mathbb{S}^{n-1}$. By rotating the coordinates, we can assume that $\left\{b_{i j}\left(x_0, t\right)\right\}$ is the diagonal, $\lambda_{\max }\left(\left\{b_ {i j}\left(x_0, t\right)\right\}\right)=b_{11}\left(x_0, t\right)$ is a diagonal. To get the lower bound of the principal curvature, just show that $b_{11}$ has an upper bound. At the same time, transforming (\ref{Eq:fzhsh}) into
$$
\widetilde{H}(x, t)=\log b_{11}-A \log h(x, t)+\exp^{B\left|\nabla h\right|^2} .
$$

Thus, according to the above assumption, for any fixed $t \in(0,+\infty)$, $\widetilde{H}(x, t)$ has a local maximum at $x_0$, which states that, at $x_0$, we obtain
\begin{align*}
0=\nabla_i \widetilde{H} & =b^{11} \nabla_i b_{11}-A \frac{h_i}{h}+2 B \exp^{B\left|\nabla h\right|^2} \sum_j h_j h_{j i} \\
& =b^{11}\left(b_{1 i 1}\right)-A \frac{h_i}{h}+2 B \exp^{B\left|\nabla h\right|^2} h_i h_{i i} \\
& =b^{11}\left(h_{i 11}+h_1 \delta_{1 i}\right)-A \frac{h_i}{h}+2 B \exp^{B\left|\nabla h\right|^2} h_i h_{i i},
\end{align*}
and
\begin{align}\label{Eq:edejd}
\nonumber0 \geq \nabla_{i i} \widetilde{H}=&b^{11} \nabla_{i i} b_{11}-\left(b^{11}\right)^2\left(\nabla_i b_{11}\right)^2-A\left(\frac{h_{i i}}{h}-\frac{h_i^2}{h^2}\right) \\
&+4 B^2 \exp^{B\left|\nabla h\right|^2}\left(h_i h_{i i}\right)^2+2 B \exp^{B\left|\nabla h\right|^2}\left[\sum_j h_j h_{j i i}+h_{i i}^2\right] .
\end{align}
Since,
\begin{align}\label{Eq:egytds}
\nonumber\frac{\partial \widetilde{H}(x, t)}{\partial t}  =&b^{11} \partial_t b_{11}-A \frac{h_t}{h}+2 B \exp^{B\left|\nabla h\right|^2} \sum_j h_j h_{j t}\\
=& b^{11}\left(h_{11 t}+h_t\right) 
-A \frac{h_t}{h}+2 B \exp^{B\left|\nabla h\right|^2} \sum_j h_j h_{j t}.
\end{align}
In addition, use (\ref{eq:thgqwf}) to obtain,
\begin{align}\label{Eq:lxbliu}
\nonumber\log \left(h-h_t\right) & =\log \left(\eta(t)\frac{f(x) h(x, t)  \mathcal{K}(x, t)}{\mid \nabla u(F(x, t), t)\mid^{p-1}}\right) \\
& =-\log \operatorname{det}\left(\nabla_{\mathbb{S}^{n-1}}^2 h+h I\right)+\Psi(x, t),
\end{align}
where
\begin{align}\label{Eq:lxblf}
\Psi(x, t):=\log \left(\eta(t) \frac{f(x) h(x, t) }{|\nabla u(F(x, t), t)|^{p-1}} \right) .
\end{align}
Taking the  second  covariant derivative with respect to $e^j$ on both sides of the equation (\ref{Eq:lxbliu}), we can derive
\begin{align}\label{Eq:yjxbds}
\nonumber\frac{h_j-h_{j t}}{h-h_t} & =-\sum_{i, k} b^{i k} \nabla_j b_{i k}+\nabla_j \Psi \\
& =-\sum_i b^{i i}\left(h_{j i i}+h_i \delta_{i j}\right)+\nabla_j \Psi,
\end{align}
and
\begin{align}\label{Eq:ejxbds}
\nonumber&\frac{h_{11}-h_{11 t}}{h-h_t}-\frac{\left(h_1-h_{1 t}\right)^2}{\left(h-h_t\right)^2}\\
=&-\sum_i b^{i i} \nabla_{11} b_{i i}+\sum_{i, k} b^{i i} b^{k k}\left(\nabla_1 b_{i k}\right)^2+\nabla_{11} \Psi.
\end{align}
The Ricci identity on the sphere is written as
\begin{align}\label{Eq:qmlqh}
\nabla_{11} b_{i j}=\nabla_{i j} b_{11}-\delta_{i j} b_{11}+\delta_{11} b_{i j}-\delta_{1 i} b_{1 j}+\delta_{1 j} b_{1 i}.
\end{align}
Thus, by the  (\ref{Eq:egytds}), (\ref{Eq:ejxbds}), (\ref{Eq:205}), (\ref{Eq:qmlqh}), (\ref{Eq:edejd}) and (\ref{Eq:yjxbds}), we have at $x_0$,

\begin{align}\label{Eq:edtgj}
\nonumber\frac{\frac{\partial }{\partial t} \widetilde{H}(x, t)}{h-h_t} & =\frac{b^{11}\left(h_{11 t}+h_t\right)}{h-h_t}-A \frac{h_t}{h\left(h-h_t\right)}+2 B \exp^{B\left|\nabla h\right|^2} \frac{\sum_j h_j h_{j t}}{h-h_t} \\
\nonumber& =b^{11}\left[\frac{\left(h_{11 t}-h_{11}+h_{11}+h-h+h_t\right)}{h-h_t}\right]- \frac{A}{h} \frac{h_t-h+h}{\left(h-h_t\right)}+2 B \exp^{B\left|\nabla h\right|^2} \frac{\sum_j h_j h_{j t}}{h-h_t} \\
\nonumber& =b^{11}\left[-\frac{\left(h_1-h_{1 t}\right)^2}{\left(h-h_t\right)^2}+\sum_i b^{i i} \nabla_{11} b_{i i}-\sum_{i, k} b^{i i} b^{k k}\left(\nabla_1 b_{i k}\right)^2-\nabla_{11} \Psi\right]\\
\nonumber& +\frac{1}{h-h_t}-b^{11}+\frac{A}{h}-\frac{A}{h-h_t}+2 B \exp^{B\left|\nabla h\right|^2} \frac{\sum_j h_j h_{j t}}{h-h_t} \\
\nonumber\leq & b^{11}\left[\sum_i b^{i i}\left(\nabla_{i i} b_{11}-b_{11}+b_{i i}\right)-\sum_{i, k} b^{i i} b^{k k}\left(\nabla_1 b_{i k}\right)^2\right] \\
\nonumber& +\frac{1}{h-h_t}(1-A)-b^{11} \nabla_{11} \Psi+\frac{A}{h}+2 B \exp^{B\left|\nabla h\right|^2} \frac{\sum_j h_j h_{j t}}{h-h_t} \\
\nonumber\leq & \sum_i b^{i i}\left[\left(b^{11}\right)^2\left(\nabla_i b_{11}\right)^2+A\left(\frac{h_{i i}}{h}-\frac{h_i^2}{h^2}\right)-4 B^2 \exp^{B\left|\nabla h\right|^2}\left(h_i h_{i i}\right)^2\right. \\
\nonumber& \left.-2 B \exp^{B\left|\nabla h\right|^2}\left(\sum_j h_j h_{j i i}+h_{i i}^2\right)\right]-b^{11} \sum_{i, k} b^{i i} b^{k k}\left(\nabla_1 b_{i k}\right)^2-b^{11} \nabla_{11} \Psi+\frac{A}{h} \\
\nonumber& +2 B \exp^{B\left|\nabla h\right|^2} \frac{\sum_j h_j h_{j t}}{h-h_t}+\frac{1}{h-h_t}(1-A) \\
\nonumber\leq & \sum_i b^{i i} d\left(\frac{h_{i i}+h-h}{h}-\frac{h_i^2}{h^2}\right)-2 B \exp^{B\left|\nabla h\right|^2} \sum_i b^{i i} h_{i i}^2 \\
\nonumber& +2 B \exp^{B\left|\nabla h\right|^2} \sum_j h_j\left[-\sum_i b^{i i} h_{j i i}+\frac{h_{j t}}{h-h_t}\right] \\
\nonumber& -4 B^2 \exp^{B\left|\nabla h\right|^2} \sum_i b^{i i} h_i^2\left(b_{i i}-h\right)^2-b^{11} \nabla_{11} \Psi+\frac{A}{h}+\frac{1}{h-h_t}(1-A)\\
\nonumber& \leq-b^{11} \nabla_{11} \Psi-2 B \exp^{B\left|\nabla h\right|^2} \sum_j h_j \nabla_j \Psi+\left(2 B \exp^{B\left|\nabla h\right|^2}\left|\nabla h\right|^2-A\right) \sum_i b^{i i} \\
\nonumber&-2 B \exp^{B\left|\nabla h\right|^2} \sum_i b_{i i}-4 B^2 \exp^{B\left|\nabla h\right|^2} \sum_i b_{i i} h_i^2+8 h B^2 \exp^{B\left|\nabla h\right|^2}\left|\nabla h\right|^2 \\
&+\frac{2 B \exp^{B\left|\nabla h\right|^2}\left|\nabla h\right|^2+1-A}{h-h_t}+4(n-1) A B \exp^{B\left|\nabla h\right|^2}+\frac{n A}{h} .
\end{align}
Then, differentiate from (\ref{Eq:lxblf}) and get
\begin{align}\label{Eq:fdy1}
\nabla_j \Psi=\frac{f_j}{f}+(1-p) |\nabla u|^{-p}|\nabla u|_j+\frac{h_j}{h},
\end{align}
thus
\begin{align}\label{Eq:fdy2}
\nabla_{11} \Psi= & \frac{f f_{11}-f_1^2}{f^2}-(1-p) |\nabla u|^{-p-1}\left(|\nabla u|_1\right)^2 +(1-p)|\nabla u|^{-p}|\nabla u|_{11}+\frac{h h_{11}-h_1^2}{h^2} .
\end{align}
Since  
$$
|\nabla u(F(x, t), t)|=\langle-\nabla u(F(x, t), t), x\rangle.
$$
Taking the covariant derivative of both sides, we have
\begin{align}\label{Eq:udy1}
\nonumber|\nabla u|_j & =-\langle\nabla u , e^j\rangle-\langle\left(\nabla^2 u\right) F_j , x\rangle \\
& =-\left\langle\sum_i b_{i j}\left(\nabla^2 u\right) e^i, x\right\rangle,
\end{align}
thus
\begin{align*}
|\nabla u|_{11}= & - \left\langle\sum_i b_{i 11}\left(\nabla^2 u\right) e^i , x\right\rangle-\left\langle\sum_{i, j} b_{j 1} b_{i 1} \left(\nabla^3 u\right) e^j e^i , x\right\rangle+\left\langle\sum_i b_{i 1} \delta_{1 i}\left(\nabla^2 u\right) x, x\right\rangle \\
& -\left\langle\sum_i b_{i 1}\left(\nabla^2 u\right) e^i  e_1\right\rangle.
\end{align*}
Below, we estimate the first and second items of 
(\ref{Eq:edtgj}). By (\ref{Eq:fdy1}), Lemma \ref{lemco} and Lemma \ref{lemcqgj}, we obtain 
\begin{align*}
-  2 B \exp^{B\left|\nabla h\right|^2} \sum_j h_j \nabla_j \Psi
= & -2 B \exp^{B\left|\nabla h\right|^2} \sum_j h_j\left[\frac{f_j}{f}+(1-p) |\nabla u|^{-p}|\nabla u|_j+\frac{h_j}{h}\right] \\
\leq & C_1 B \exp^{B\left|\nabla h\right|^2}+2(1-p)B \exp^{B\left|\nabla h\right|^2}|\nabla u|^{-p}\bigg\langle\sum_{j} h_jb_{j j} \left(\nabla^2 u\right) e^j , x\bigg\rangle.
\end{align*}
Using (\ref{Eq:fdy2}),  Lemma \ref{lemco} and Lemma \ref{lemcqgj}, for  $1<p<\infty$, we have 
\begin{align*}
&-b^{11} \nabla_{11} \Psi \\
= &-b^{11}\left[\frac{f f_{11}-f_1^2}{f^2}-(1-p) |\nabla u|^{-p-1}\left(|\nabla u|_1\right)^2  +(1-p)|\nabla u|^{-p}|\nabla u|_{11}+\frac{h h_{11}-h_1^2}{h^2}\right]\\
= &-b^{11}\left[\frac{f f_{11}-f_1^2}{f^2}-(1-p) |\nabla u|^{-p-1}\left\langle-\sum_i b_{i j}\left(\left(\nabla^2 u\right) e^i , x\right)\right\rangle^2  \right.\\
&\left.+(1-p)|\nabla u|^{-p}\left(\bigg\langle-\sum_i b_{i 11} \left(\nabla^2 u\right) e^i , x\bigg\rangle-\bigg\langle\sum_{i, j} b_{j 1} b_{i 1}\left(\nabla^3 u\right) e^j e^i , x\bigg\rangle\right.\right.\\
&\left.\left.+\bigg\langle\sum_i b_{i 1} \delta_{1 i}\left(\nabla^2 u\right) x, x\bigg\rangle-\bigg\langle\sum_i b_{i 1}\left(\nabla^2 u\right) e^i , e_1\bigg\rangle\right)+\frac{h (b_{11}-h)-h_1^2}{h^2}\right]\\
= &b^{11}\left[-\frac{f f_{11}-f_1^2}{f^2}+(1-p) |\nabla u|^{-p-1}\bigg\langle\sum_i b_{i j}\left(\nabla^2 u\right) e^i , x\bigg\rangle^2  \right.\\
&\left.+\frac{(1-p)}{|\nabla u|^p}\bigg\langle\sum_i b_{i 11}\left(\nabla^2 u\right) e^i, x\bigg\rangle+\frac{(1-p)}{|\nabla u|^p}\bigg\langle\sum_{i, j} b_{j 1} b_{i 1}\left(\nabla^3 u\right) e^j e^i , x\bigg\rangle\right.\\
&\left.-\frac{(1-p)}{|\nabla u|^p}\bigg\langle\sum_i b_{i 1} \delta_{1 i}\left(\nabla^2 u\right) x , x\bigg\rangle+\frac{(1-p)}{|\nabla u|^p}\bigg\langle\sum_i b_{i 1}\left(\nabla^2 u\right) e^i , e_1\bigg\rangle+\frac{h (b_{11}-h)-h_1^2}{h^2}\right]\\
\leq&b^{11}\left[C_3 +\frac{(1-p)}{|\nabla u|^p}\bigg\langle\sum_i b_{i 11}\left(\nabla^2 u\right) e^i , x\bigg\rangle+(1-p)(n-1) b_{1 1}^2\frac{|\nabla^3 u|}{|\nabla u|^p} \right.\\
&\left.+2(1-p)(n-1)b_{11}\frac{|\nabla^2 u|}{|\nabla u|^p}-\frac{b_{11}}{h^2}+1+\frac{h_1^2}{h^2}\right].
\end{align*}
Hence, by  Lemma \ref{lemejtg}, Lemma \ref{lemco} and Lemma \ref{lemcqgj}, we have 
\begin{align}\label{Eq:lgzh}
\nonumber& -2 B \exp^{B\left|\nabla h\right|^2} \sum_j h_j \nabla_j \Psi-b^{11} \nabla_{11} \Psi \\
\nonumber\leq& C_1 B \exp^{B\left|\nabla h\right|^2}+\frac{(1-p)}{|\nabla u|^p}\bigg\langle\sum_ib^{11} b_{i 11}\left(\nabla^2 u\right) e^i , x\bigg\rangle\\
\nonumber&+b^{11}\left[C_3 +(1-p)(n-1) b_{1 1}^2\frac{|\nabla^3 u|}{|\nabla u|^p}+2(1-p)(n-1)b_{11}\frac{|\nabla^2 u|}{|\nabla u|^p}-\frac{b_{11}}{h^2}+1+\frac{h_1^2}{h^2}\right]\\
\leq& \widetilde{C}_1 B \exp^{B\left|\nabla h\right|^2}+\widetilde{C}_2 b^{11}+\widetilde{C}_3+\widetilde{C}_4 b_{11}+\widetilde{C}_5 d
\end{align}

Substituting (\ref{Eq:lgzh}) into (\ref{Eq:edtgj}), and choosing $A=2 B \max _{\mathbb{S}^{n-1} \times(0, \infty)} \exp^{B\left|\nabla h\right|^2}\left|\nabla h\right|^2+1$, with a suitable $B>\widetilde{C}_4$, then, at $x_0$, we have
\begin{align}\label{Eq:gje}
\frac{\frac{\partial }{\partial t} \widetilde{H}(x, t)}{h-h_t} \leq & \widetilde{C}_1 B \exp^{B\left|\nabla h\right|^2}+\widetilde{C}_2 b^{11}+\widetilde{C}_3+\widetilde{C}_4 b_{11}+\widetilde{C}_5+\widetilde{C}_6 B \\
\nonumber& -2 B \exp^{B\left|\nabla h\right|^2} \sum_i b_{i i}+\widetilde{C}_7 B^2 \exp^{B\left|\nabla h\right|^2}+4(n-1) A B \exp^{B\left|\nabla h\right|^2}+\frac{n A}{h}<0,
\end{align}
provided $b_{11}>>1$. (\ref{Eq:gje}) yields
$$
H\left(x_0, t\right)=\widetilde{H}\left(x_0, t\right) \leq C
$$
for some $C>0$, independent of $t$ and $x$. This implies that the principal radii is bounded. This completes the proof of Lemma \ref{thmcegj}.
\end{proof}

\section{The existence of smooth solution}\label{sec4}
In this section, we will give the proof of the main theorem.
\begin{proof}[Proof of Theorem \ref{thm11}] The uniform estimates of support function and principal curvature showed in  Lemma \ref{lemco}, Lemma \ref{lemcqgj} and Lemma \ref{thmcegj} imply that (\ref{eq:thgqwf}) is uniformly parabolic in $C^2$ norm space. Then, by virtue of the standard Krylov's regularity theory \cite{KRY1987} of uniform parabolic equation, the estimates of higher derivatives can be naturally obtained. Hence, we obtain that the long-time existence and regularity of the solution of (\ref{eq:thgqwf}). Furthermore, there exists a uniformly positive constant $C$, independent of $t$, such that
\begin{align}\label{Eq:lerg}
\|h\|_{C_{x, t}^{i, j}\left(\mathbb{S}^{n-1} \times[0,+\infty)\right)} \leq C
\end{align}
for each pair of nonnegative integers $i$ and $j$.
From Lemma \ref{lemddx}, we have
\begin{align}\label{Eq:nhkp}
\frac{d \Psi\left(\Omega_t\right)}{d t} \leq 0,
\end{align}
thus, by  (\ref{Eq:nhkp}), if there exists a $t_0$ such that
$$
\left.\frac{d \Psi\left(\Omega_t\right)}{d t}\right|_{t=t_0}=0,
$$
then,
$$
\mid \nabla u(F(x, t_0), t_0)\mid^{p-1}\frac{1}{\mathcal{K}(x, t_0)}=\frac{ \Gamma\left(\Omega_{t_0}\right)}{\int_{\mathbb{S}^{n-1}} f(x) h(x, t_0) d x}f(x).
$$
Let $\Omega=\Omega_{t_0}$, thus, $\Omega$ satisfies (\ref{eq:thmaj}).

In addition, suppose that for every $t>0$,
\begin{align}\label{Eq:psdf}
\frac{d \Psi\left(\Omega_t\right)}{d t}<0 .
\end{align}
According to (\ref{Eq:lerg}), applying the Arzel\`{a}-Ascoli theorem and diagonal argument, there exists a subsequence of $t$, denoted as $\left\{t_k\right\}_{k \in\mathbb{ N}} \subset(0,+\infty)$, and there is a smooth function $h(x)$, such that
\begin{align}\label{Eq:psty}
\left\|h\left(x, t_k\right)-h(x)\right\|_{C^i\left(\mathbb{S}^{n-1}\right)} \rightarrow 0
\end{align}
uniformly for each nonnegative integer $i$ as $t_k \rightarrow+\infty$. This shows that $h(x)$ is a support function. We use $\Omega$ to represent the convex body determined by $h(x)$. Therefore, $\Omega$ is a smooth, origin symmetric and strictly convex body.

Using (\ref{Eq:lerg}) and the consistent estimates in Lemma \ref{lemco}, Lemma \ref{lemcqgj} and Lemma \ref{thmcegj}, we get $\Psi\left(\Omega_t\right)$ is a bounded function in $t$, and $\frac{d \Psi\left(\Omega_t\right)}{d t}$  is uniformly continuous. So, for any $t>0$, using (\ref{Eq:psdf}), there exist positive constants $C$, independent of $t$, such that
\begin{align}\label{Eq:p1sty}
\int_0^t\left(-\frac{d \Psi\left(\Omega_t\right)}{d t}\right) d t=\Psi\left(\Omega_0\right)-\Psi\left(\Omega_t\right) \leq C,
\end{align}
thus, we have
\begin{align*} 
\int_0^{+\infty}\left(-\frac{d \Psi\left(\Omega_t\right)}{d t}\right) d t \leq C,
\end{align*}
this shows that there is a subsequence $t_k \rightarrow+\infty$ of  $t$, such that
\begin{align}\label{Eq:yuzj}
\left.\frac{d \Psi\left(\Omega_t\right)}{d t}\right|_{t=t_k} \rightarrow 0 \quad \text { as } t_k \rightarrow+\infty .
\end{align}
Using again the estimates established in Lemma \ref{lemco}, Lemma \ref{lemcqgj} and Lemma \ref{thmcegj}, we can obtain that there exists a positive constant $\lambda>0$, such that

\begin{align}\label{Eq:yuij}
\nonumber\left.\frac{d \Psi\left(\Omega_t\right)}{d t}\right|_{t=t_k} & =-\left.\int_{\mathbb{S}^{n-1}}\frac{\bigg(-\frac{f(x)\Gamma(\Omega_t)|\nabla u(F(x, t), t)|^{-(p-1)}\mathcal{K}h}{\int_{\mathbb{S}^{n-1}} f(x) h(x, t) d x}+h\bigg)^2}{\Gamma(\Omega_t)|\nabla u(F(x, t), t)|^{-(p-1)}\mathcal{K}h}dx\right|_{t=t_k} \\
& \leq-\left.\lambda \int_{\mathbb{S}^{n-1}}\left[-f h \mathcal{K} \frac{1}{|\nabla u(F(x, t), t)|^{p-1}} \frac{\Gamma(\Omega_t)}{\int_{\mathbb{S}^{n-1}} f h d x}+h\right]	^2 d x\right|_{t=t_k} .
\end{align}
Taking the limit $t_k \rightarrow+\infty$ in (\ref{Eq:yuij}), by (\ref{Eq:psty}) and (\ref{Eq:yuzj}), we can deduce that
$$
0=\left.\lim _{t_k \rightarrow+\infty} \frac{d \Psi\left(\Omega_t\right)}{d t}\right|_{t=t_k} \leq-\lambda \int_{\mathbb{S}^{n-1}}\left[-f h \mathcal{K} \frac{1}{|\nabla u(F(x, t), t)|^{p-1}} \frac{\Gamma(\Omega_t)}{\int_{\mathbb{S}^{n-1}} f h d x}+h\right]^2 d x \leq 0,
$$
which implies that
$$\mid \nabla u(F(x))\mid^{p-1}\frac{1}{\mathcal{K}(x)}=f(x)\frac{ \Gamma\left(\Omega\right)}{\int_{\mathbb{S}^{n-1}} f(x) h(x) d x}.$$
Therefore, $h(x)$ is the solution to equation (\ref{eq:thmaj}). This completes the proof of Theorem \ref{thm11}.
\end{proof}

\end{document}